\newtheorem{thm}{Theorem}[section]
\newtheorem{cor}[thm]{Corollary}
\newtheorem{conj}[thm]{Conjecture}
\newtheorem{exmp}[thm]{Example}
\newtheorem{lem}[thm]{Lemma}
\theoremstyle{definition}
\theoremstyle{remark}
\newtheorem{rem}[thm]{Remark}
\numberwithin{equation}{section}
\begin{document}

\title{Infinite-time singularity type of the K\"ahler-Ricci flow}

\author{Yashan Zhang}

\email{yashanzh@163.com}
\thanks{The author is partially supported by the Project MYRG2015-00235-FST of the University of Macau}

\begin{abstract}
For the K\"ahler-Ricci flow on a compact K\"ahler manifold with semi-ample canonical line bundle, we prove the singularity type at infinity does not depend on the choice of the initial metric. We also provide new simple proofs for some existing classification results on infinite-time singularity type of the K\"ahler-Ricci flow.
\end{abstract}

\maketitle

\section{Introduction}
We study the singularity type of long time solution of the K\"ahler-Ricci flow on compact K\"ahler manifolds. Thanks to the maximal existence time theorem of the K\"ahler-Ricci flow \cite{C,TZo,Ts}, the existence of long time solution is equivalent to nefness of the canonical line bundle. In this paper, motivated by the Abundance Conjecture (which predicts that if the canonical line bundle of an algebraic manifold is nef, then it is semi-ample), we will further restrict our discussions on $n$-dimensional K\"ahler manifold, denoted by $X$, with semi-ample canonical line bundle $K_X$, and so the Kodaira dimension $kod(X)\in\{0,1,\ldots,n\}$.
\par Given an arbitrary K\"ahler metric $\omega_0$ on $X$, let $\omega=\omega(t)_{t\in[0,\infty)}$ be the solution to the K\"ahler-Ricci flow
\begin{equation}\label{nkrf}
\partial_t\omega=-Ric(\omega)-\omega
\end{equation}
running from $\omega_0$. Recall from \cite{Ha93} that the infinite-time singularities of the K\"ahler-Ricci flow are divided into two types \textrm{IIb} and \textrm{III}. Precisely, we say a long time solution of the K\"ahler-Ricci flow \eqref{nkrf} is of type \textrm{IIb} if
$$\sup_{X\times[0,\infty)}|Rm(\omega(t))|_{\omega(t)}=\infty$$
and of type  \textrm{III} if
$$\sup_{X\times[0,\infty)}|Rm(\omega(t))|_{\omega(t)}<\infty.$$
There are nice results in the study of infinite-time singualry type of the K\"ahler-Ricci flow, see e.g. \cite{FZ,Gi,Ha,ToZy}. In particular, for the K\"ahler-Ricci flow on K\"ahler manifold with semi-ample canonical line bundle, Tosatti-Zhang \cite{ToZy} classified the infinite-time singularity type for many cases. More precisely, if we let
$$f:X\to X_{can}\subset \mathbb{CP}^N$$
be the semi-ample fibration with connected fibers induced by pluricanonical system of $X$, where $X_{can}$, a $kod(X)$-dimensional irreducible normal projective variety, is the canonical model of $X$, and $V\subset X_{can}$ be the singular set of $X_{can}$ together with the critical values of $f$, and $X_y=f^{-1}(y)$ be a smooth fiber for $y\in X_{can}\setminus V$, then \cite[Theorems 1.5, 1.6]{ToZy} reads

\begin{thm}\cite[Theorems 1.5, 1.6]{ToZy}\label{TZ}
Let $X$ be a compact K\"ahler manifold with semi-ample $K_X$ and consider a solution of the K\"ahler-Ricci flow \eqref{nkrf}.
\begin{itemize}
\item[(1)] Suppose $kod(X)=0$.
\begin{itemize}
\item If $X$ is a finite quotient of a torus, then the solution is of type \textrm{III}.
\item If $K_X$ is not a finite quotient of a torus, the the solution is of type \textrm{IIb}.
\end{itemize}
\item[(2)] Suppose $kod(X)=n$.
\begin{itemize}
\item If $K_X$ is ample, then the solution is of type \textrm{III}.
\item If $K_X$ is not ample, the the solution is of type \textrm{IIb}.
\end{itemize}
\item[(3)] Suppose $0<kod(X)<n$.
\begin{itemize}
\item If $X_y$ is not a finite quotient of a torus, then the solution is of type \textrm{IIb}.
\item If $X_y$ is a finite quotient of a torus and $V=\emptyset$, then the solution is of type \textrm{III}.
\end{itemize}
\item[(4)] Suppose $n=2$ and $kod(X)=1$, then the solution is of type \textrm{III} if and only if the only singular fibers on $f$ are of type $mI_0$, $m>1$.
\end{itemize}
\end{thm}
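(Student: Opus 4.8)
The plan is to reduce the entire classification to a single scaling analysis of the curvature quantity $|Rm(\omega(t))|_{\omega(t)}$ against the collapsing behaviour of the flow, and then to read off each case from the geometry of the fibres of $f$. The starting point is the convergence theory for \eqref{nkrf} when $K_X$ is semi-ample: writing $\chi=f^*\omega_{can}$ for the pullback to $X$ of a K\"ahler form on $X_{can}$ representing the semi-ample class $-2\pi c_1(X)$, the cohomology classes satisfy $[\omega(t)]\to[\chi]$, and on the regular part $X\setminus f^{-1}(V)$ one has the uniform fibration estimate
\[
C^{-1}\big(f^*\omega_{can}+e^{-t}\omega_{F}\big)\le \omega(t)\le C\big(f^*\omega_{can}+e^{-t}\omega_{F}\big),
\]
where $\omega_F$ restricts to the Ricci-flat Calabi--Yau metric on each smooth fibre $X_y$. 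Thus away from $V$ the base directions stay bounded while the fibre directions collapse at rate $e^{-t}$; the cases $kod(X)=0$ and $kod(X)=n$ are the two degenerate extremes of this picture, where the ``fibre'' is all of $X$ (base a point) or a point (base $X_{can}$), respectively.

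The type \textrm{IIb} direction amounts to producing points where the curvature is unbounded, and I would isolate exactly two mechanisms. The first is \emph{fibre non-flatness}. Restricting to a smooth fibre and using the estimate above, $\omega(t)|_{X_y}$ is uniformly comparable to $e^{-t}\omega_{F}|_{X_y}$, so by the scaling of curvature under $g\mapsto e^{-t}g$ one gets $|Rm(\omega(t))|_{\omega(t)}\gtrsim e^{t}\,|Rm(\omega_F|_{X_y})|$ along $X_y$. Since a compact Ricci-flat K\"ahler manifold is flat if and only if it is a finite unramified quotient of a torus, the right-hand side is bounded away from $0$ precisely when $X_y$ is \emph{not} a torus quotient, forcing blow-up; this yields the \textrm{IIb} halves of (1), (2), (3). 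The second mechanism is \emph{degeneration of the limiting metric}, covering $K_X$ not ample in (2) and the singular-fibre refinement in (4): near a contracted subvariety (for $kod(X)=n$) or near a genuinely singular fibre the limit $\chi$ fails to be a smooth K\"ahler metric, and a local lower bound on $|Rm|$ blowing up in $t$ can be extracted by comparing with the model degeneration.

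The type \textrm{III} direction is the harder half, since it requires a uniform \emph{upper} bound $\sup_{X\times[0,\infty)}|Rm(\omega(t))|_{\omega(t)}<\infty$ in the collapsing regime. When the smooth fibres are finite quotients of tori and $V=\emptyset$ (case (3)), the fibration $f$ is, after a finite base change, modelled on a holomorphic torus bundle, and the collapsing metric $\omega(t)$ is uniformly equivalent with all derivatives to a fixed semi-flat model metric of bounded geometry; the curvature bound then follows from local parabolic Schauder estimates applied to the flow in collapsed coordinates, in the spirit of the higher-order estimates of Gill and of Fong--Zhang and Tosatti--Weinkove--Yang. For the two extremes the argument is cleaner: in (1) with $X$ a torus quotient and in (2) with $K_X$ ample the rescaled flow $e^{t}\omega(t)$ (resp.\ $\omega(t)$) converges smoothly to a flat, resp.\ K\"ahler--Einstein, metric, and the curvature is controlled directly. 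For the surface case (4) one invokes Kodaira's classification of singular elliptic fibres: a fibre of type $mI_0$ is a smooth elliptic curve with multiplicity, hence orbifold-smooth and flat, so it does not obstruct the bounded-geometry model and the type \textrm{III} bound persists, whereas every other Kodaira type contains rational components and is non-flat, throwing us back onto the \textrm{IIb} mechanisms above.

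The main obstacle is precisely the uniform curvature upper bound in the collapsing regime of case (3): unlike the scalar curvature, the \emph{full} curvature tensor must be controlled uniformly in $t$ while the fibre metric shrinks like $e^{-t}$, and this is where the hypotheses ``torus fibre'' and ``$V=\emptyset$'' do the essential work---flat fibres kill the dangerous $e^{t}$ factor and $V=\emptyset$ guarantees the global bounded-geometry model on which the local estimates can be run. I expect that everything else (the lower bounds, the two extreme Kodaira dimensions, and the bookkeeping of the surface case) follows comparatively formally once this estimate and the fibration structure are in hand.
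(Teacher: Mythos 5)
The statement you are proving is quoted in the paper from Tosatti--Zhang \cite{ToZy}; the paper itself does not reprove it in full, but it does give new proofs of items (1) and (2) by a route entirely different from yours: it first establishes that the singularity type is independent of the initial metric (Theorem \ref{main0}, proved by Schwarz-lemma and maximum-principle comparisons of two flow solutions), and then reads off the type from one explicit special solution ($e^{-t}\omega_{CY}$ in case (1), the static K\"ahler--Einstein solution in case (2)), together with Lemma \ref{lem4.1} for the converse half of (2). Your proposal instead follows the general shape of the original \cite{ToZy} argument (collapsing estimates plus fibre geometry), so it should be judged on those terms.

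There is a genuine gap at the central step of your type \textrm{IIb} mechanism. From the two-sided bound $C^{-1}(f^*\omega_{can}+e^{-t}\omega_F)\le\omega(t)\le C(f^*\omega_{can}+e^{-t}\omega_F)$ you infer $|Rm(\omega(t))|_{\omega(t)}\gtrsim e^{t}|Rm(\omega_F|_{X_y})|$ ``by the scaling of curvature.'' This does not follow: uniform equivalence of metrics is a $C^0$ statement and gives no pointwise control of curvature in either direction (two uniformly equivalent metrics can have arbitrarily different curvature tensors), and in addition the curvature of the induced metric $\omega(t)|_{X_y}$ differs from the restriction of the ambient curvature by second-fundamental-form terms via the Gauss equation. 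This is precisely where the real work in \cite{ToZy} lies: one argues by contradiction, assumes type \textrm{III}, uses Shi's estimates and Hamilton--Cheeger--Gromov compactness of the rescaled flow, controls the second fundamental form of the fibres, and only then concludes that the Ricci-flat fibre metric must be flat. Without some such higher-order input your lower bound is unjustified. The same vagueness affects your second \textrm{IIb} mechanism: for $K_X$ non-ample in (2) you assert that a blow-up of $|Rm|$ ``can be extracted by comparing with the model degeneration,'' but no mechanism is given, and the known proofs go by contradiction (either via rational curves as in \cite{ToZy}, or analytically as in the paper's Example \ref{ex2}: type \textrm{III} forces $C^{-1}\hat\omega\le\omega(t)\le C\hat\omega$, hence smooth subconvergence to a K\"ahler metric in $-2\pi c_1(X)$, hence ampleness). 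Finally, the ``only if'' half of (4) cannot be reduced to your two mechanisms as stated, since your fibration estimate is only claimed away from $f^{-1}(V)$ and the obstruction there comes from the singular fibres themselves.
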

Note that if $n=dim(X)=1$, then the conclusions in items (1) and (2) of Theorem \ref{TZ} were already shown by Hamilton \cite{Ha}; the first part of item (2) of Theorem \ref{TZ} was contained in Cao \cite{C} and Tsuji \cite{Ts}; and in the second part of Theorem \ref{TZ}(3), if in particular $X_y$ is a torus and also $V=\emptyset$, then that conclusion was first proved by Fong-Zhang \cite[Section 5]{FZ} by assuming $X$ is projective and the initial K\"ahler class is rational, and the projectivity and rationality assumptions were removed by Hein-Tosatti in \cite{HeTo} (also see \cite{Gi,SW} for certain special case, i.e. $X$ is a product). Moreover, when $n=2$, since Abundance Conjecture holds for K\"ahler surfaces, we can just assume $K_X$ is nef and a complete classification of singularity type is given by combining items (1), (2) and (4) of Theorem \ref{TZ}.

As an immediate consequence of Theorem \ref{TZ}, one has
\begin{cor}\cite{ToZy}\label{TZ1}
In the cases covered by Theorem \ref{TZ}, the singularity type does not depend on the choice of the initial metric.
\end{cor}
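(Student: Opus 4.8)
The plan is simply to read off from Theorem \ref{TZ} that, within each case it treats, the distinction between type \textrm{IIb} and type \textrm{III} is governed by a condition entirely intrinsic to the complex manifold $X$ together with its fixed semi-ample fibration $f$, making no reference whatsoever to the initial metric $\omega_0$ or to the resulting solution $\omega(t)$.

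Concretely, I would proceed case by case. In items (1) and (2) the dichotomy is decided by whether $X$ is a finite quotient of a torus, respectively whether $K_X$ is ample; both are properties of $X$ alone. In item (3) the relevant data are a smooth fiber $X_y=f^{-1}(y)$ and the set $V\subset X_{can}$, both of which are fixed once $X$ and the pluricanonical fibration $f$ are fixed, independently of $\omega_0$. In item (4) the deciding condition is the list of singular-fiber types of $f$, again an intrinsic invariant. Since the case into which a given solution falls is itself determined by $n=\dim(X)$ and $kod(X)$, neither of which depends on $\omega_0$, any two initial metrics yield solutions lying in the same case and satisfying the same intrinsic condition.

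Putting this together, I would fix two arbitrary K\"ahler metrics $\omega_0$ and $\omega_0'$ on $X$ and let $\omega(t)$, $\omega'(t)$ denote the corresponding solutions of \eqref{nkrf}. Both are governed by the same case of Theorem \ref{TZ} and by the same metric-free criterion, so Theorem \ref{TZ} assigns them the same singularity type, which is the assertion. The only point requiring care is the qualifier \emph{in the cases covered by Theorem \ref{TZ}}: since item (3) does not treat the configuration in which $X_y$ is a finite quotient of a torus while $V\neq\emptyset$, the argument says nothing about metric-independence there, and this restriction must be kept explicit. There is no genuine analytic obstacle here; the content of the corollary is precisely that the classification of Theorem \ref{TZ} is phrased in terms intrinsic to $X$ and $f$.
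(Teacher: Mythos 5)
Your proposal is correct and coincides with the paper's own treatment: the corollary is stated there as an immediate consequence of Theorem \ref{TZ}, precisely because every criterion appearing in that theorem (finite quotient of a torus, ampleness of $K_X$, the fiber $X_y$, the set $V$, the singular-fiber types) is intrinsic to $X$ and the fibration $f$ and makes no reference to the initial metric. Your explicit caveat about the uncovered configuration in item (3) is also exactly the restriction the paper keeps, so nothing further is needed.
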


The classification given by Theorem \ref{TZ} (and hence Corollary \ref{TZ1}) is almost complete (for K\"ahler manifold with semi-ample canonical line bundle) but leaves open the case when\\

\begin{itemize}
\item[($\bigstar$)]\emph{$0<kod(X)<n$ ($n\ge3$) and the general fiber is a finite quotient of a torus and $V\neq\emptyset$}.\\
\end{itemize}

\par In general, it is expected (see \cite[Section 1]{ToZy}) that the singularity type in case ($\bigstar$) is also independent of the initial metric. More generally, a conjecture raised by Tosatti in \cite[Conjecture 6.7]{To.kawa} predicts

\begin{conj}
\cite[Conjecture 6.7]{To.kawa}
For the K\"ahler-Ricci flow on any compact K\"ahler manifold with nef canonical line bundle, the singularity type at infinity does not depend on the choice of the initial metric.
\end{conj}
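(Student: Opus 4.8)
The plan is to prove the conjecture in the semi-ample setting at hand by reducing it to a pointwise, two-sided comparison of the full curvature tensors of two flows with different initial data, so that \emph{neither} flow need be classified as type \textrm{IIb} or \textrm{III} in advance. First I would dispose of every case already settled by Theorem \ref{TZ}: when $kod(X)\in\{0,n\}$, or when $0<kod(X)<n$ but either the general fiber $X_y$ is not a finite quotient of a torus, or $X_y$ is such a quotient and $V=\emptyset$, the singularity type is pinned down by Theorem \ref{TZ} uniformly in the initial metric, so independence is immediate. Thus the only genuine case is ($\bigstar$): $0<kod(X)<n$, $X_y$ a finite quotient of a torus, and $V\neq\emptyset$.

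For case ($\bigstar$), fix two initial K\"ahler metrics $\omega_0,\omega_0'$ and realize both solutions through the parabolic complex Monge-Amp\`ere reduction relative to a common smooth semipositive representative $\chi\in 2\pi c_1(K_X)$ (for instance $\chi=f^*\omega_{FS}$). Since $[\omega(t)]=e^{-t}[\omega_0]+(1-e^{-t})[\chi]$ and likewise for $\omega'(t)$, the two reference forms $\hat\omega_t=e^{-t}\omega_0+(1-e^{-t})\chi$ and $\hat\omega_t'=e^{-t}\omega_0'+(1-e^{-t})\chi$ differ only by the exponentially decaying term $e^{-t}(\omega_0-\omega_0')$. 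Writing $\omega(t)=\hat\omega_t+\sqrt{-1}\partial\bar\partial\varphi$ and $\omega'(t)=\hat\omega_t'+\sqrt{-1}\partial\bar\partial\varphi'$, I would first establish, by maximum-principle estimates on $\varphi-\varphi'$ and its time derivative for the two Monge-Amp\`ere flows, a uniform metric equivalence $C^{-1}\omega'(t)\le\omega(t)\le C\,\omega'(t)$ on $X\times[0,\infty)$ together with the decay $\|\varphi-\varphi'\|_{C^0}\le Ce^{-\delta t}$. The content of this equivalence is that both flows collapse the fibers of $f$ at comparable rates, independently of the choice of initial metric.

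The main step, and the main obstacle, is to transfer the curvature bound between the two uniformly equivalent flows \emph{despite the collapsing}. Here I would exploit the defining hypothesis of ($\bigstar$), namely that the fibers are finite quotients of tori: over the smooth locus $X\setminus f^{-1}(V)$, near any point one can pass to a local holomorphic cover that unwraps the collapsing flat fiber directions, on which both $\omega(t)$ and $\omega'(t)$ become non-collapsed with injectivity radius bounded below on balls of a definite size, while the pointwise norm $|Rm|$ is unchanged under the covering. On these non-collapsed local models, a bound $\sup_X|Rm(\omega'(t))|_{\omega'(t)}\le K$ upgrades, via Shi-type and local parabolic Schauder estimates for the K\"ahler-Ricci flow, to uniform $C^k$ control of $\omega'(t)$ on slightly smaller balls with constants depending only on $K$ and the fixed geometry. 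Combining this with the metric equivalence and with the relation $\omega(t)=\omega'(t)+e^{-t}(\omega_0-\omega_0')+\sqrt{-1}\partial\bar\partial(\varphi-\varphi')$, the analogous control of the lifted $\omega(t)$ follows once the potential difference $\varphi-\varphi'$ is shown to decay together with its higher derivatives (by interpolation against the uniform $C^k$ bounds). This yields the $t$-uniform estimate $|Rm(\omega(t))|_{\omega(t)}\le C\bigl(1+\sup_X|Rm(\omega'(t))|_{\omega'(t)}\bigr)$, and by symmetry the reverse inequality; hence $\sup_{X\times[0,\infty)}|Rm(\omega)|$ is finite if and only if $\sup_{X\times[0,\infty)}|Rm(\omega')|$ is, which is precisely the asserted independence.

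I expect the delicate points to be, first, controlling the behaviour near the bad locus $f^{-1}(V)$, where the fibration degenerates and the unwrapping breaks down; this must be handled either by a cutoff and interior-estimate argument, or by the observation that one only needs the comparison on the non-collapsed local models and that the worst curvature behaviour persists under it. Second, the uniform (in the collapsing) decay of the higher derivatives of $\varphi-\varphi'$ is exactly where the torus-quotient structure and the local non-collapsed models are essential. A softer route worth attempting first is to compare each flow directly to a single $\omega_0$-independent reference family (for example the Song-Tian canonical metric on $X_{can}$ pulled back and corrected along the fibers), thereby reducing independence to the statement that every flow shares the singularity type of this fixed reference; the analytic core, the curvature transfer across the collapsing, remains the same.
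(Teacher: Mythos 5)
Your overall logical frame (reduce to: if one initial metric gives a type \textrm{III} solution then all do) is the right one, and restricting to the semi-ample case is what the paper actually proves. But there are two genuine gaps in the execution. First, the uniform two-sided equivalence $C^{-1}\omega'(t)\le\omega(t)\le C\,\omega'(t)$ between two \emph{arbitrary} flows does not follow from maximum-principle estimates on $\varphi-\varphi'$ and $\partial_t(\varphi-\varphi')$: those give only comparability of the volume forms $\omega^n$ and $(\omega')^n$, not of the metrics. The standard route to $tr_{\omega}\omega'\le C$ is the parabolic Schwarz lemma, and the bad term there is the bisectional curvature of the \emph{target} metric $\omega'$ contracted against $\omega$; without already assuming $\sup|Rm(\omega')|_{\omega'}<\infty$ this term is uncontrolled. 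So the metric equivalence is only available once you have assumed one flow is type \textrm{III} --- which is exactly how the paper structures its proof (its inequality \eqref{trace0} uses the type \textrm{III} bound \eqref{type} on $\tilde\omega$ to control $\tilde R_{i\bar jp\bar q}$). Second, your ``main step'' of transferring the curvature bound by unwrapping the flat torus fibers on local covers breaks down precisely where it matters: in case ($\bigstar$) the locus $f^{-1}(V)$ is where any type \textrm{IIb} behaviour would concentrate, and your proposal concedes that the non-collapsed local models do not exist there. Saying that ``the worst curvature behaviour persists'' under the comparison away from $f^{-1}(V)$ is an assertion of the conclusion, not an argument. The claimed higher-order decay of $\varphi-\varphi'$ uniformly in the collapsing is likewise unsupported.

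Beyond the gaps, your approach is considerably more complicated than necessary, and the extra structure you invoke is not needed. The paper's proof makes no case division on $kod(X)$, never uses Theorem \ref{TZ}, and never uses the torus structure of the fibers or any local covering/non-collapsing argument. It runs entirely by global maximum principles on $X$: (i) a Schwarz-lemma estimate for $tr_\omega\tilde\omega$ with the quantity $\log tr_\omega\tilde\omega+(C_0+1)\tilde\varphi-A\varphi$, using only the $C^0$ bounds of Lemma \ref{ST} and the curvature bound on the given type \textrm{III} solution $\tilde\omega$, yielding $C^{-1}\tilde\omega\le\omega\le C\tilde\omega$; (ii) a Calabi-type third-order estimate for $S=|\Gamma-\tilde\Gamma|^2_\omega$, using Shi's estimate for $\tilde\omega$; (iii) the evolution inequality for $|Rm(\omega)|_\omega$ combined with the bound on $S$. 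This handles the singular fibers with no special treatment. I would advise restructuring your argument around that scheme: assume one flow has bounded curvature, and compare the other flow to it globally rather than trying to build a symmetric, slice-wise curvature comparison through local models of the collapsing.
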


\par In this note, we shall partially conform the conjectures mentioned in the above last two paragraphs. Our main result can be stated as follows.

\begin{thm}\label{main0}
Let $X$ be a compact K\"ahler manifold with semi-ample $K_X$. Then the singularity type of the K\"ahler-Ricci flow \eqref{nkrf} on $X$ does not depend on the choice of the initial metric.
\end{thm}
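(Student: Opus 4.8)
The plan is to bypass the case-by-case classification entirely and instead compare two arbitrary solutions directly, showing that the property of having uniformly bounded curvature is transferred between them. Let $\omega=\omega(t)$ and $\tilde\omega=\tilde\omega(t)$ be the solutions of \eqref{nkrf} starting from two initial metrics $\omega_0$ and $\tilde\omega_0$. Since the desired conclusion is symmetric in the two solutions, it suffices to prove that if $\tilde\omega$ is of type \textrm{III} then so is $\omega$. The structural fact driving everything is that both flows share the same asymptotic cohomology class: $[\omega(t)]=e^{-t}[\omega_0]+(1-e^{-t})\,2\pi c_1(K_X)$ and likewise for $\tilde\omega$, so that $[\omega(t)]-[\tilde\omega(t)]=e^{-t}([\omega_0]-[\tilde\omega_0])\to 0$ exponentially fast.

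First I would set up reference forms adapted to this. Fix a smooth semipositive $\chi=f^*\omega_{FS}$ representing $2\pi c_1(K_X)$ and a fixed smooth volume form $\Omega$ normalized so that \eqref{nkrf} reduces to a scalar parabolic complex Monge--Amp\`ere equation. Setting $\hat\omega_t=e^{-t}\omega_0+(1-e^{-t})\chi$ and $\hat{\tilde\omega}_t=e^{-t}\tilde\omega_0+(1-e^{-t})\chi$, one has $\hat\omega_t-\hat{\tilde\omega}_t=e^{-t}\theta$ with $\theta:=\omega_0-\tilde\omega_0$ a fixed real $(1,1)$-form. Writing $\omega=\hat\omega_t+i\partial\bar\partial\varphi$ and $\tilde\omega=\hat{\tilde\omega}_t+i\partial\bar\partial\tilde\varphi$, the potential difference $\psi:=\varphi-\tilde\varphi$ satisfies
\[
\partial_t\psi=\log\frac{(\tilde\omega+e^{-t}\theta+i\partial\bar\partial\psi)^n}{\tilde\omega^n}-\psi,\qquad \psi(0)=0,
\]
with $\omega=\tilde\omega+e^{-t}\theta+i\partial\bar\partial\psi$. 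The crucial observation is that, although $\tilde\omega$ collapses the fibers of $f$ at rate $e^{-t}$, so does the perturbation $e^{-t}\theta$ in the fiber directions; consequently $e^{-t}\theta$ is uniformly dominated by the background in the eigenvalue sense, namely $-C\tilde\omega\le e^{-t}\theta\le C\tilde\omega$ on $X\times[0,\infty)$. Thus the equation above is a bounded perturbation of the Monge--Amp\`ere equation with (collapsing) background $\tilde\omega$.

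Next I would run the a priori estimates for $\psi$, now exploiting that $\tilde\omega$ is of type \textrm{III}. By definition this gives $|Rm(\tilde\omega)|_{\tilde\omega}\le C$ on $X\times[0,\infty)$, and Shi's local estimates upgrade this to uniform-in-$t$ bounds on all covariant derivatives of $Rm(\tilde\omega)$. The uniform $C^0$ bound $\|\psi\|_{L^\infty}\le C$ follows from the known uniform $C^0$ bounds for the two potentials separately (Song--Tian, Tian--Zhang), since $\psi=\varphi-\tilde\varphi$. The heart of the argument is the second-order estimate: a parabolic Aubin--Yau computation for $\operatorname{tr}_{\tilde\omega}\omega$, in which the bisectional curvature of the background enters but is now bounded, combined with the $C^0$ bound (Yau's trick applied to $\log\operatorname{tr}_{\tilde\omega}\omega-A\psi$) and a two-sided bound on $\omega^n/\tilde\omega^n$ read off from $\partial_t\psi+\psi=\log(\omega^n/\tilde\omega^n)$, yields the uniform equivalence $C^{-1}\tilde\omega\le\omega\le C\tilde\omega$. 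With the metrics uniformly equivalent and $|Rm(\tilde\omega)|_{\tilde\omega}$ bounded, the Calabi third-order estimate and its higher-order analogues---carried out by tensorial maximum principle on the fixed compact manifold $X$, which sidesteps the shrinking injectivity radius caused by collapsing---give uniform-in-$t$ bounds on $|\nabla^k_{\tilde\omega}\omega|_{\tilde\omega}$. These translate, via the equivalence of metrics, into a uniform bound on $|Rm(\omega)|_\omega$, so that $\omega$ is of type \textrm{III} as well.

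The main obstacle I anticipate is precisely this chain of a priori estimates made uniform in $t$ \emph{under collapsing}: one must control $\operatorname{tr}_{\tilde\omega}\omega$ and all higher derivatives globally on $X$---in particular over the singular fibers $f^{-1}(V)$ responsible for case ($\bigstar$)---with constants independent of time, and measured against a background $\tilde\omega$ whose injectivity radius degenerates. The two facts that make this feasible are the uniformly bounded geometry of $\tilde\omega$ furnished by the type \textrm{III} hypothesis together with Shi's estimates, and the uniform domination $-C\tilde\omega\le e^{-t}\theta\le C\tilde\omega$ of the cohomological defect relative to the collapsing metric. Granting the transfer of type \textrm{III}, the theorem follows at once: type \textrm{III} holds for one initial metric if and only if it holds for every initial metric, hence the singularity type is independent of the initial metric. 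In particular this recovers the metric-independence of Corollary \ref{TZ1}, and settles case ($\bigstar$), without appealing to the case-by-case analysis behind Theorem \ref{TZ}.
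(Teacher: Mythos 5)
Your overall architecture coincides with the paper's: by symmetry one reduces to showing that type \textrm{III} transfers from one solution $\tilde\omega(t)$ to an arbitrary solution $\omega(t)$; one first establishes the uniform equivalence $C^{-1}\tilde\omega\le\omega\le C\tilde\omega$ by a maximum-principle argument on a log-trace quantity corrected by the (uniformly bounded) potentials, using that the curvature of $\tilde\omega$ entering the trace evolution is bounded by the type \textrm{III} hypothesis and that the volume ratio $\omega^n/\tilde\omega^n$ is pinned down by the bounds on $\varphi,\tilde\varphi,\partial_t\varphi,\partial_t\tilde\varphi$; one then runs a Calabi third-order estimate (with Shi's bound on $\nabla Rm(\tilde\omega)$) and concludes a bound on $|Rm(\omega)|_\omega$. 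Up to tracing in the opposite direction ($tr_{\tilde\omega}\omega$ versus the paper's $tr_{\omega}\tilde\omega$, both of which work since $Rm(\tilde\omega)$ is bounded on both sides), this is exactly the paper's proof of Theorem \ref{main}.

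The one step that does not hold up as written is the claimed uniform domination $-C\tilde\omega\le e^{-t}\theta\le C\tilde\omega$ with respect to the \emph{evolving} metric $\tilde\omega(t)$, which you single out as one of the two facts making the estimates feasible. This would require the fibrewise lower bound $\tilde\omega(t)\ge c\,e^{-t}\tilde\omega_0$, i.e.\ that $e^{t}\tilde\omega(t)$ stays uniformly nondegenerate along the fibres; that is a genuinely hard estimate for collapsing K\"ahler--Ricci flow, not known in general and not implied in any elementary way by $|Rm(\tilde\omega)|_{\tilde\omega}\le C$ (bounded Ricci curvature only gives $\tilde\omega(t)\ge e^{-(C+1)t}\tilde\omega_0$, which decays too fast). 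Your justification that ``$\tilde\omega$ collapses the fibres at rate $e^{-t}$, and so does $e^{-t}\theta$'' presupposes precisely this two-sided control. Fortunately the claim is not needed: the cohomological defect only enters the test function through a term of size $Ae^{-t}|tr_{\omega}\theta|$, and since $\pm e^{-t}\theta\le Ce^{-t}\omega_0\le C\omega_t$ for the \emph{reference} family $\omega_t=e^{-t}\omega_0+(1-e^{-t})f^*\chi$ (a triviality), this term is absorbed by the good term $tr_{\omega}\omega_t$ coming from $(\partial_t-\Delta_\omega)\varphi\ge tr_{\omega}\omega_t-C$. This is exactly why the paper keeps the two potentials separate and uses the barrier $\log tr_{\omega}\tilde\omega+(C_0+1)\tilde\varphi-A\varphi$ (with $(C_0+1)\tilde\omega_t\le A\omega_t$, a comparison of reference families only) rather than working with the single difference $\psi=\varphi-\tilde\varphi$ against the background $\tilde\omega(t)$. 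With that reorganization your argument closes and matches the paper's.
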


Note that the Abundance Conjecture holds for any $3$-dimensional compact K\"ahler manifold (see \cite{CHP}). Therefore, Theorem \ref{main0} implies the following
\begin{cor}
Let $X$ be a $3$-dimensional compact K\"ahler manifold with nef $K_X$. Then the singularity type of the K\"ahler-Ricci flow \eqref{nkrf} on $X$ does not depend on the choice of the initial metric.
\end{cor}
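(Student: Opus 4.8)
The Corollary is immediate from Theorem \ref{main0}: by the Abundance Theorem for $3$-dimensional compact K\"ahler manifolds \cite{CHP}, nefness of $K_X$ is upgraded to semi-ampleness, so Theorem \ref{main0} applies verbatim. Hence the only real content is Theorem \ref{main0}, and I describe how I would prove it. The singularity type is encoded in whether $\mathcal{K}(\omega_0):=\sup_{X\times[0,\infty)}|Rm(\omega(t))|_{\omega(t)}$ is finite. Given two initial metrics $\omega_0,\tilde\omega_0$ with solutions $\omega(t),\tilde\omega(t)$, interchanging their roles shows it suffices to prove a single implication: if $\omega(t)$ has uniformly bounded curvature, then so does $\tilde\omega(t)$. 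The starting point is cohomological: along \eqref{nkrf} one has $[\omega(t)]-[\tilde\omega(t)]=e^{-t}([\omega_0]-[\tilde\omega_0])$, so fixing a smooth closed $(1,1)$-form $\theta$ representing $[\omega_0]-[\tilde\omega_0]$ we may write $\tilde\omega(t)=\omega(t)-e^{-t}\theta+\sqrt{-1}\partial\bar\partial\psi(t)$ for a relative potential $\psi$, which after normalizing the time-dependent constant solves the parabolic complex Monge--Amp\`ere equation $\partial_t\psi=\log\frac{\tilde\omega(t)^n}{\omega(t)^n}-\psi$ with background $\omega(t)$.

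I would then run the standard a priori estimates, but taking $\omega(t)$ itself as the background and feeding in its bounded curvature. First, a uniform $C^0$ bound (indeed exponential decay) for $\psi$ from the maximum principle, using that both solutions collapse the same fibration $f$ at the same rate, so the relative volume $\tilde\omega^n/\omega^n$ and the $e^{-t}\theta$-contribution stay bounded. Second, a uniform upper Laplacian bound $\mathrm{tr}_{\omega(t)}\tilde\omega(t)\le C$ from the Chern--Lu (parabolic Schwarz) inequality, which needs only the bounded bisectional curvature of the background $\omega(t)$ supplied by the type III hypothesis, together with the $C^0$ bound; combined with the bounded Monge--Amp\`ere density this yields the two-sided uniform equivalence $C^{-1}\omega(t)\le\tilde\omega(t)\le C\omega(t)$.

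To transfer the curvature bound itself I would upgrade, via Shi's local derivative estimates applied on each interval $[t-1,t]$, the hypothesis $|Rm(\omega(t))|_{\omega(t)}\le K$ to uniform bounds $|\nabla^k_\omega Rm(\omega(t))|_{\omega(t)}\le C_k$ for $t\ge1$, and then run Calabi's third-order estimate and its higher-order analogues, all maximum-principle arguments on tensorial quantities built from $h:=\omega^{-1}\tilde\omega$, to bound $|\nabla^j_\omega h|_\omega$ up to the order needed. Substituting into the identity expressing $Rm(\tilde\omega)$ through $Rm(\omega)$, $h$, $h^{-1}$ and $\nabla_\omega h,\nabla^2_\omega h$ then gives $\mathcal{K}(\tilde\omega_0)<\infty$. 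By the symmetry noted above this proves the equivalence of types, and since no feature of the argument distinguishes the fibration types, it covers all cases, in particular the case ($\bigstar$) left open by Theorem \ref{TZ}.

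The delicate point, and the reason the argument proves independence without determining the type, is that every estimate must stay uniform in $t$ while the fibers collapse and the injectivity radius over $V$ tends to zero. The saving feature is precisely that all the estimates used are tensorial maximum-principle estimates depending only on curvature bounds for the background $\omega(t)$ and never on an injectivity-radius lower bound, so they survive the collapse; this is exactly why the type III hypothesis on $\omega$ is indispensable and cannot be dropped. Concretely, I expect the main obstacle to be the uniform Chern--Lu/Laplacian bound and the uniform higher-order bounds \emph{near the singular fibers}, where one must control the $e^{-t}\theta$-term and the relative volume ratio using the common collapse rate of the two solutions rather than any individual non-collapsing structure.
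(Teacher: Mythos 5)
Your proof of the corollary is exactly the paper's: abundance for K\"ahler threefolds \cite{CHP} upgrades nef $K_X$ to semi-ample, and Theorem \ref{main0} applies. Your accompanying sketch of Theorem \ref{main0} also follows essentially the same route as the paper's Section \ref{proof} --- Schwarz-lemma trace estimates played against the type \textrm{III} solution, metric equivalence via the bounded volume ratio from Lemma \ref{ST}, then Calabi-type and Shi estimates closed by the maximum principle --- differing only in bookkeeping (a single relative potential $\psi$ instead of the two potentials $\varphi,\tilde\varphi$, and a higher-order bootstrap where the paper instead exploits the good term $-|\overline{\nabla}\Psi|_\omega^2$ together with the evolution of $|Rm(\omega)|_\omega$).
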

As we have seen from Corollary \ref{TZ1}, in those cases covered by Theorem \ref{TZ}, our Theorem \ref{main0} is not new. In fact, Theorem \ref{main0} is only new in case $(\bigstar)$, for which a classification of singularity type is still open. However, we would like to point out that, while the original proof of Theorem \ref{TZ} (and hence Corollary \ref{TZ1}) in \cite{ToZy} is based on analysis on the blowup limits of the K\"ahler-Ricci flow and a nice observation which relates singularity type to an algebraic condition (i.e. existence of some special rational curves), our proof of Theorem \ref{main0} takes a totally different (and more elementary) approach and does not involve the classification results, e.g. Theorem \ref{TZ}, on singularity type, and hence also provides an alternative proof for Corollary \ref{TZ1}. In fact, our arguments in this note will only use the maximum principle, and so are purely analytic.\\

\par
Theorem \ref{main0} can help to classify the singularity type. For example, by applying Theorem \ref{main0}, we can give a simple proof for item (1) of Theorem \ref{TZ}, see Example \ref{ex1} in Section \ref{ex}. Moreover, by the argument for Theorem \ref{main0}, we will provide a simple proof for item (2) of Theorem \ref{TZ}. To be more precise, let's state the harder part as follows.

\begin{thm}[second part of Theorem \ref{TZ} (2)]\label{ex2.0}
On a compact K\"ahler manifold $X$ with semi-ample $K_X$ and $kod(X)=n$, if there exists a type \textrm{III} solution $\omega=\omega(t)$ to the K\"ahler-Ricci flow \eqref{nkrf}, then $K_X$ is ample.
\end{thm}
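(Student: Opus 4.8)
The plan is to prove the contrapositive: assuming $K_X$ is \emph{not} ample, I will show that \emph{every} solution of \eqref{nkrf} is of type \textrm{IIb}, so that no type \textrm{III} solution can exist. Since $kod(X)=n$, the bundle $K_X$ is semi-ample and big, hence $X$ is Moishezon; being also K\"ahler it is projective, and the pluricanonical map $f:X\to X_{can}$ is a birational morphism onto the canonical model. If $K_X$ were not ample, then $f$ would fail to be an isomorphism, so its exceptional locus is nonempty and positive-dimensional. The geometric input I would invoke is that the positive-dimensional fibers of a birational morphism from a smooth projective variety are covered by rational curves; hence there is a rational curve $C\subset X$ contracted by $f$, i.e. with $\int_C c_1(K_X)=0$, while $\int_C\omega_0=:a>0$ since $\omega_0$ is K\"ahler.

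The first analytic step is the cohomological evolution of the K\"ahler class. Passing \eqref{nkrf} to the level of cohomology gives, with a suitable normalization of $c_1(K_X)$,
\begin{equation*}
[\omega(t)]=e^{-t}[\omega_0]+(1-e^{-t})\,c_1(K_X).
\end{equation*}
Pairing with the fixed homology class $[C]$ and using $\int_C c_1(K_X)=0$ yields
\begin{equation*}
\int_C\omega(t)=e^{-t}\int_C\omega_0=e^{-t}a\longrightarrow 0\qquad(t\to\infty),
\end{equation*}
so the area of the contracted curve collapses exponentially, \emph{regardless} of the initial metric.

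The second step turns a curvature bound into a lower bound for this area, via Gauss--Bonnet. Let $\nu:\mathbb{P}^1\to C\subset X$ be the normalization, and equip $\mathbb{P}^1$ with the pullback metric $\nu^*\omega(t)$. Since $C$ is a holomorphic curve, the Gauss equation for complex submanifolds forces its induced Gauss curvature to be bounded above by the ambient holomorphic sectional curvature in the tangent direction, because the second fundamental form contributes with a nonpositive sign; thus pointwise $K_{\nu^*\omega(t)}\le \sup_X|Rm(\omega(t))|_{\omega(t)}$. If $\omega(t)$ were of type \textrm{III}, this supremum would be bounded by some $C_0<\infty$ uniformly in $t$, and Gauss--Bonnet on $\mathbb{P}^1$ would give
\begin{equation*}
4\pi=\int_{\mathbb{P}^1}K_{\nu^*\omega(t)}\,dA\le C_0\int_{\mathbb{P}^1}\nu^*\omega(t)=C_0\int_C\omega(t)=C_0\,e^{-t}a\longrightarrow 0,
\end{equation*}
a contradiction. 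Hence the solution must be of type \textrm{IIb}, proving the contrapositive.

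The step I expect to be the main obstacle is the \emph{geometric} one: producing a genuine \emph{rational} curve in the exceptional locus (genus $0$ is essential, since Gauss--Bonnet needs $\chi(C)>0$). Any curve with $\int_C c_1(K_X)=0$ is immediate from non-ampleness, but I must ensure it can be taken rational; this is where I would cite the uniruledness, or rational chain connectedness, of the positive-dimensional fibers of the crepant birational morphism $f$. The analytic steps are then routine: the cohomological identity is elementary, and the Gauss-equation estimate is classical, requiring only the care of passing to the normalization so that the area and Gauss--Bonnet identities hold on a smooth genus-$0$ model (the finitely many ramification points of $\nu$, where $\nu^*\omega(t)$ degenerates, do not affect the integrals). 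Notably, the contradiction uses only boundedness of the full curvature tensor, the defining feature of type \textrm{III}, together with an initial-metric-independent algebraic condition, which also explains why the same mechanism underlies the independence of the singularity type on the initial metric in Theorem \ref{main0}.
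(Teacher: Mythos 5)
Your argument is essentially correct, but it is a genuinely different route from the paper's -- in fact it is, in outline, the \emph{original} Tosatti--Zhang proof that this paper is explicitly written to avoid. You argue the contrapositive: non-ampleness of the big, semi-ample $K_X$ forces the birational morphism $f:X\to X_{can}$ to contract a rational curve $C$ with $K_X\cdot C=0$ (this requires Kawamata's theorem on rational curves in fibers of birational contractions, cited as \cite{Ka} here), the cohomological evolution $[\omega(t)]=e^{-t}[\omega_0]+(1-e^{-t})2\pi c_1(K_X)$ makes $\int_C\omega(t)\to0$, and a bounded-curvature assumption would force a positive lower area bound for a rational curve via the Gauss equation and Gauss--Bonnet. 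The paper instead argues directly and purely analytically: Lemma \ref{lem4.1} (a Schwarz-lemma computation plus the maximum principle and the $C^0$ bounds of Lemma \ref{ST}) gives $\omega(t)\le C\hat\omega$ for a fixed metric $\hat\omega$; since $kod(X)=n$ the Monge--Amp\`ere equation gives the volume lower bound $\omega(t)^n\ge C^{-1}\hat\omega^n$, hence uniform metric equivalence $C^{-1}\hat\omega\le\omega(t)\le C\hat\omega$; higher-order estimates and Arzel\`a--Ascoli then produce a smooth K\"ahler limit $\omega_\infty\in-2\pi c_1(X)$, so $K_X$ is ample by Kodaira embedding. Your approach buys a concrete geometric obstruction (the collapsing rational curve) and generalizes to the other type \textrm{IIb} cases of Theorem \ref{TZ}, but at the cost of nontrivial algebraic-geometric input; the paper's approach uses only the maximum principle, which is precisely its advertised point. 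One small correction to your analytic step: at ramification points of the normalization $\nu$ the pullback metric has cone angles $2\pi k\ge2\pi$, so Gauss--Bonnet gives $\int_{\mathbb{P}^1}K\,dA\ge2\pi\chi(\mathbb{P}^1)=4\pi$ rather than equality; fortunately this is the direction you need, so the contradiction $4\pi\le C_0\,e^{-t}a$ survives.
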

Theorem \ref{ex2.0} was first proved by Tosatti-Zhang \cite{ToZy}, whose argument involves the existence of some special rational curves guarenteed by Moishezon \cite{Mo} and Kawamata \cite{Ka}. Later, an analytic proof for Theorem \ref{ex2.0} was given in Guo \cite[Theorem 1.2]{Guo} by using Cheeger-Colding's theory on Ricci limit space. In this note, we shall provide a simple analytic proof for Theorem \ref{ex2.0}, which will only involve the maximum principle argument, see Example \ref{ex2} in Section \ref{ex} for more details.\\

\par In the remaining part of this note, we will recall some necessary properties of the K\"ahler-Ricci flow in Section \ref{apriori} and then give a proof of Theorem \ref{main0} in Section \ref{proof}. In Section \ref{ex} we will present some related examples, which in particular contain new simple proofs for items (1) and (2) of Theorem \ref{TZ}.

\section{Properties of the K\"ahler-Ricci flow}\label{apriori}
We collect some necessary properties of the K\"ahler-Ricci flow.
\par Let
$$f:X\to X_{can}\subset \mathbb{CP}^N$$
be the semi-ample fibration with connected fibers induced by pluricanonical system, where $X_{can}$, a $kod(X)$-dimensional irreducible normal projective variety, is the canonical model of $X$. Let $\chi$ be a multiple of Fubini-Study metric on $\mathbb{CP}^N$ such that $f^*\chi$ is a smooth semi-positive representative of $-2\pi c_1(X)$ and $\Omega$ a fixed smooth positive volume form on $X$ with $\sqrt{-1}\partial\bar\partial\log\Omega=f^*\chi$. Given an arbitrary K\"ahler metric $\omega_0$ on $X$. We set $\omega_t:=e^{-t}\omega_0+(1-e^{-t})f^*\chi$ and reduce the K\"ahler-Ricci flow to the following parabolic complex Monge-Amp\`{e}re equation of $\varphi=\varphi(t)$,
$$\partial_t\varphi=\log\frac{e^{(n-k)t}(\omega_t+\sqrt{-1}\partial\bar\partial\varphi)^n}{\Omega}-\varphi$$
with $\varphi(0)=0$, where $n=dim(X)$, $k=kod(X)$ and $\omega=\omega_t+\sqrt{-1}\partial\bar\partial\varphi$ is the solution to the K\"ahler-Ricci flow \eqref{nkrf} with $\omega(0)=\omega_0$.

\begin{lem}\cite{ST4,Zo}\label{ST}
There exists a uniform constant $C$ such that on $X\times[0,\infty)$,
\begin{equation}\label{bound}
|\varphi|+|\partial_t\varphi|\le C.
\end{equation}
\end{lem}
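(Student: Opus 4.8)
The plan is to treat two independent quantities, the potential $\varphi$ and the log volume ratio
$$v:=\dot\varphi+\varphi=\log\frac{e^{(n-k)t}\omega^n}{\Omega},$$
and to bound each of them above and below uniformly on $X\times[0,\infty)$; since $\dot\varphi=v-\varphi$, two-sided control of $\varphi$ and of $v$ immediately gives $|\varphi|+|\dot\varphi|\le C$. Throughout I write $\dot\varphi=\partial_t\varphi$, $\omega=\omega_t+\sqrt{-1}\partial\bar\partial\varphi$, and $\Delta_\omega$, $\mathrm{tr}_\omega$ for the complex Laplacian and trace with respect to $\omega$, and I set $\square:=\partial_t-\Delta_\omega$.

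I would first dispose of the upper bound $\varphi\le C$ by the maximum principle. At a point realizing $\max_X\varphi(\cdot,t)$ one has $\sqrt{-1}\partial\bar\partial\varphi\le0$, hence $\omega\le\omega_t$, so $\omega^n\le\omega_t^n$ and $\dot\varphi\le\log\frac{e^{(n-k)t}\omega_t^n}{\Omega}-\varphi$. The key algebraic point is that $f^*\chi$ has rank at most $k$, so in the expansion of $\omega_t^n=(e^{-t}\omega_0+(1-e^{-t})f^*\chi)^n$ every term with more than $k$ factors of $f^*\chi$ vanishes; the lowest surviving power of $e^{-t}$ is therefore $e^{-(n-k)t}$, and $e^{(n-k)t}\omega_t^n\le C\,\Omega$ uniformly in $t$. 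Thus $\dot\varphi\le C-\varphi$ at the spatial maximum, and Hamilton's trick together with the comparison ODE $\frac{d}{dt}\varphi_{\max}\le C-\varphi_{\max}$ yields $\varphi\le C$ for all time.

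The main obstacle is the pair of lower bounds $\varphi\ge-C$ and $v\ge-C$ (equivalently the non-collapsing volume estimate $\omega^n\ge c\,e^{-(n-k)t}\Omega$). Here the naive maximum principle breaks down, because the leading term $\binom{n}{n-k}\,\omega_0^{n-k}\wedge(f^*\chi)^k$ of the comparison volume $e^{(n-k)t}\omega_t^n$ vanishes precisely over the critical locus $V$, so $\log\frac{e^{(n-k)t}\omega_t^n}{\Omega}\to-\infty$ there and supplies no barrier. This is exactly where pluripotential theory must enter. For the lower bound on $\varphi$ I would invoke the uniform $L^\infty$ estimate for complex Monge--Amp\`ere equations along the degenerating family of classes $[\omega_t]\to[f^*\chi]$, in the form of Kolodziej, Zhang and Eyssidieux--Guedj--Zeriahi, using that the normalized total mass $\int_X e^{(n-k)t}\omega^n=e^{(n-k)t}\int_X\omega_t^n\to\binom{n}{n-k}\int_X\omega_0^{n-k}\wedge(f^*\chi)^k>0$ stays bounded away from $0$ and $\infty$, and comparing $\varphi$ with a fixed bounded $f^*\chi$-plurisubharmonic solution of the limiting degenerate equation pulled back from $X_{can}$. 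The non-collapsing bound $v\ge-C$ is then the genuinely delicate step, established in \cite{ST4,Zo} by a parabolic refinement built on this pluripotential input; I expect this to be the hardest part of the argument.

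Finally, with $|\varphi|\le C$ and $v\ge-C$ in hand, I would close the loop by the maximum principle. Differentiating the flow equation in $t$ and using $\partial_t\omega_t=f^*\chi-\omega_t$ gives
$$\square(\dot\varphi+\varphi)=\mathrm{tr}_\omega(f^*\chi)-k,\qquad \square\varphi=\dot\varphi-n+\mathrm{tr}_\omega\omega_t.$$
Setting $w:=v-A\varphi$ with $A$ a large constant, the nonnegative term $\mathrm{tr}_\omega(f^*\chi)$ is absorbed once $A(1-e^{-t})\ge1$ (say for $t\ge\log2$), since then $[1-A(1-e^{-t})]\,\mathrm{tr}_\omega(f^*\chi)\le0$; substituting $\dot\varphi=w+(A-1)\varphi$ and using $|\varphi|\le C$ this leaves $\square w\le C-Aw$, and Hamilton's trick with the comparison ODE forces $w\le C$, whence $v\le C$ and $\dot\varphi=v-\varphi\le C$. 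The matching lower bound is immediate, $\dot\varphi=v-\varphi\ge-C$, from the non-collapsing estimate. Combining the four bounds gives $|\varphi|+|\dot\varphi|\le C$ on $X\times[0,\infty)$, with the uniform pluripotential estimate across the degeneration being the one step that goes beyond a routine maximum-principle computation.
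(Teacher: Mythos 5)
The paper does not actually prove this lemma: it is quoted verbatim from the references, with the remark that \eqref{bound} is established in \cite{Zo} when $kod(X)=n$ and in \cite[Section 2]{ST4} in general. Measured against that, your write-up is a faithful reconstruction of the strategy those references use, and the computations you do carry out are correct: the upper bound $\varphi\le C$ via $(f^*\chi)^{k+1}=0$ and the ODE comparison at the spatial maximum is exactly right; the identity $(\partial_t-\Delta_\omega)(\dot\varphi+\varphi)=\mathrm{tr}_\omega(f^*\chi)-k$ is correct; and the absorption of $\mathrm{tr}_\omega(f^*\chi)$ into $-A\,\mathrm{tr}_\omega\omega_t$ for $t\ge\log 2$ to get $\dot\varphi\le C$ is the standard and correct argument.

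The caveat is that the two lower bounds --- $\varphi\ge -C$ and the non-collapsing bound $\dot\varphi+\varphi\ge -C$ --- are precisely where all of the content of \cite{ST4,Zo} is concentrated, and your proposal treats them as black boxes. You correctly identify the required input (the bounded $\chi$-plurisubharmonic potential $\varphi_\infty$ on $X_{can}$ solving the limiting degenerate Monge--Amp\`ere equation, with $L^\infty$ control coming from Kolodziej/EGZ-type estimates applied to a measure with $L^{1+\epsilon}$ density), but the actual mechanism in \cite{ST4} is a maximum-principle comparison of $\varphi$ (respectively $\dot\varphi+\varphi$) with $(1-e^{-t})f^*\varphi_\infty$ plus correction terms, where the subtlety is that $\omega_0^{n-k}\wedge(f^*\omega_{can})^k/\Omega$ degenerates along the critical locus $V$ and so does not directly furnish a barrier; handling that degeneration is the real work. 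So your text is a correct roadmap consistent with the cited proofs rather than a self-contained argument; since the paper itself only cites these estimates, that is arguably the appropriate level of detail here, but you should be explicit that the lower bounds are being imported wholesale from \cite{ST4,Zo} and not derived.
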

The \eqref{bound} is proved in \cite{Zo} when $kod(X)=n$ and in \cite[Section 2]{ST4} when $0\le kod(X)\le n$.
\par We will also need Shi's estimates \cite{Sh} (also see \cite[Theorem 2.14]{SW})
\begin{lem}\label{Shi}
Let $\omega$ be a solution to the K\"ahler-Ricci \eqref{nkrf} on $X$ with type \textrm{III} singularity, then there exists a uniform constant $C$ such that on $X\times[0,\infty)$,
$$|\nabla_{g(t)}Rm(g(t))|_{g(t)}\le C.$$
Here $g(t)$ is the Riemannian metric associated to $\omega(t)$.
\end{lem}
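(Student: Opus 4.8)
The statement is the first of the Bernstein--Bando--Shi derivative estimates, specialized to our infinite-time setting, so the plan is to run the standard maximum principle argument while taking care of two features particular to \eqref{nkrf}: the extra normalization term $-\omega$, and the fact that we need the bound uniformly on the \emph{non-compact} time interval $[0,\infty)$ rather than on a fixed finite parabolic cylinder. Throughout, $K$ denotes the uniform curvature bound $\sup_{X\times[0,\infty)}|Rm(\omega(t))|_{g(t)}\le K$ furnished by the type \textrm{III} hypothesis, and all norms, connections and Laplacians $\Delta$ are taken with respect to the evolving metric $g(t)$.

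First I would record the Bochner-type evolution inequalities. Writing \eqref{nkrf} as the Kähler--Ricci flow together with the normalization term $-\omega$, one observes that $-\omega$ generates only a time-dependent \emph{constant} rescaling of the metric: a constant rescaling leaves the Levi-Civita connection, hence the $(1,3)$-curvature tensor, unchanged, so this term enters the curvature evolution solely through algebraic metric-contractions, i.e. through terms that are linear in $Rm$ or in $\nabla Rm$. Consequently one obtains, exactly as for the unnormalized Kähler--Ricci flow but with harmless additional terms controlled by $|Rm|$,
\[
\bigl(\partial_t-\Delta\bigr)|Rm|^2\le -2|\nabla Rm|^2+C_1,\qquad
\bigl(\partial_t-\Delta\bigr)|\nabla Rm|^2\le -2|\nabla^2 Rm|^2+C_2\,|\nabla Rm|^2,
\]
where $C_1=C_1(n,K)$ and $C_2=C_2(n,K)$; here I have already inserted $|Rm|\le K$ into the cubic reaction term $Rm*Rm*Rm$ and into $Rm*\nabla Rm*\nabla Rm$, and absorbed the bounded contributions of the normalization term into $C_1,C_2$.

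The core of the argument is a sliding-window application of Hamilton's scalar maximum principle. For each $s\ge0$ set $\tau=t-s$ and, on the slab $X\times[s,s+1]$, consider
\[
F:=\tau\,|\nabla Rm|^2+\beta\,|Rm|^2,\qquad \beta:=\tfrac12(1+C_2).
\]
Combining the two evolution inequalities and using $\tau\le1$ to discard $-2\tau|\nabla^2 Rm|^2\le0$, the coefficient of $|\nabla Rm|^2$ becomes $1+C_2-2\beta=0$, whence $\bigl(\partial_t-\Delta\bigr)F\le \beta C_1=:C'$, with $C'$ depending only on $n$ and $K$. Since $X$ is compact and $F|_{\tau=0}=\beta|Rm|^2\le\beta K^2$, the subsolution function $F-C'\tau$ attains its maximum at $\tau=0$, so $\max_X F(\cdot,s+1)\le \beta K^2+C'$; evaluating at $\tau=1$, where $F\ge|\nabla Rm|^2$, gives $|\nabla Rm|^2(\cdot,s+1)\le \beta K^2+C'$. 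This bound is independent of $s$, so $|\nabla Rm|\le C$ on $X\times[1,\infty)$, and smoothness of the flow together with compactness of $X\times[0,1]$ disposes of the remaining slab, proving the claim.

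The main obstacle is precisely the passage from a finite-window estimate to a uniform-in-time one. The bad term $\tau\,C_2|\nabla Rm|^2$ can be absorbed by the good $-2\beta|\nabla Rm|^2$ coming from the evolution of $|Rm|^2$ only while $\tau$ stays bounded (taking $\beta$ to grow with $\tau$ would degrade the final bound), so the estimate naturally lives on a fixed finite window; what then rescues uniformity at infinity is the type \textrm{III} hypothesis itself, which supplies one and the same curvature bound $K$ on \emph{every} unit window and thereby lets the window estimate be run with a single constant and patched across $[0,\infty)$ by time-translation. The only computation that must be carried out honestly is the verification that the normalization term $-\omega$ enters the curvature evolution solely through terms linear in $Rm$ or $\nabla Rm$, so that it is swallowed by $|Rm|\le K$; this is routine.
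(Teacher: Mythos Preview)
Your argument is correct and is precisely the standard Bernstein--Bando--Shi maximum-principle computation with the test function $\tau|\nabla Rm|^2+\beta|Rm|^2$. The paper does not supply its own proof of this lemma at all: it merely cites Shi's original paper and \cite[Theorem 2.14]{SW}, where exactly the argument you wrote out (evolution inequalities for $|Rm|^2$ and $|\nabla Rm|^2$, then the coupled test function on a unit time window) is carried through; so your proposal is not a different route but a faithful unpacking of the cited reference.
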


\section{Proof of theorem \ref{main0}}\label{proof}
We now give a proof of our main result Theorem \ref{main0}. To this end, we will prove the following key observation, which implies Theorem \ref{main0} immediately and may be interesting in itself.
\begin{thm}\label{main}
Let $X$ be a compact K\"ahler manifold with semi-ample $K_X$. Assume there exists a K\"ahler metric $\tilde\omega_0$ such that the K\"ahler-Ricci flow $\eqref{nkrf}$ running from $\tilde\omega_0$ develops type \textrm{III} singularity. Then the K\"ahler-Ricci flow \eqref{nkrf} running form any K\"ahler metric on $X$ develops type \textrm{III} singularity.
\end{thm}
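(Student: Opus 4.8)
The plan is to compare the given flow $\omega$ directly with the type III flow $\tilde\omega$ by maximum-principle estimates only, never passing to the collapsed limit. Writing both solutions in their Monge-Amp\`ere forms $\tilde\omega=\tilde\omega_t+\sqrt{-1}\partial\bar\partial\tilde\varphi$ and $\omega=\omega_t+\sqrt{-1}\partial\bar\partial\varphi$, the two reference forms share the same limit $f^*\chi$ and differ only by $e^{-t}\eta$ with $\eta=\omega_0-\tilde\omega_0$ a fixed smooth form. Setting $u=\varphi-\tilde\varphi$ gives $\omega=\tilde\omega+e^{-t}\eta+\sqrt{-1}\partial\bar\partial u$ together with the scalar parabolic equation $\partial_t u=\log\frac{\omega^n}{\tilde\omega^n}-u$. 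Applying Lemma \ref{ST} to both flows yields $|u|\le C$, and since $\log\frac{\omega^n}{\tilde\omega^n}=\partial_t u+u$ is then uniformly bounded, the two volume forms are uniformly comparable on $X\times[0,\infty)$.

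The first genuine step is a uniform metric equivalence $C^{-1}\tilde\omega\le\omega\le C\tilde\omega$. For the upper bound I would run a parabolic Schwarz/Aubin-Yau estimate on $\log \mathrm{tr}_{\tilde\omega}\omega-Au$ for a large constant $A$. Because $\tilde\omega$ is a type III solution, its bisectional curvature is two-sidedly bounded, and this is exactly what controls the bad term in the Chern-Lu inequality; taking $A$ large absorbs the remaining first-order and $e^{-t}\eta$ contributions, and evaluating at a space-time maximum gives $\mathrm{tr}_{\tilde\omega}\omega\le C$, i.e. $\omega\le C\tilde\omega$. The lower bound then follows for free: with all eigenvalues of $\omega$ relative to $\tilde\omega$ bounded above and their product (the volume ratio) bounded below, each eigenvalue is bounded below, so $\omega\ge C^{-1}\tilde\omega$.

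With metric equivalence in hand, I would upgrade to the curvature bound by a Calabi-type higher-order maximum-principle estimate applied to the difference of the two flows, say to $|\Psi|^2_{\tilde g}$ with $\Psi=\nabla^{\omega}-\nabla^{\tilde\omega}$ the tensorial difference of Chern connections. Here Lemma \ref{Shi} enters decisively: applied to $\tilde\omega$ it supplies the uniform bound $|\nabla_{\tilde g}Rm(\tilde g)|_{\tilde g}\le C$ on the background curvature derivatives, which is precisely the ingredient needed to close the maximum principle for the difference quantity. Controlling $\Psi$ and its relevant derivative against $\tilde\omega$, together with the schematic identity $Rm(\omega)=Rm(\tilde\omega)+\nabla^{\tilde\omega}\Psi+\Psi\ast\Psi$ and the two-sided curvature bound for $\tilde\omega$, gives $|Rm(\omega)|_{\tilde g}\le C$, and metric equivalence converts this into $|Rm(\omega)|_{\omega}\le C$. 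Thus $\omega$ is of type III, which is the assertion.

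I expect the main obstacle to be the metric-equivalence step. Unlike the usual Schwarz lemma against a fixed background, the comparison metric $\tilde\omega$ here is itself evolving by \eqref{nkrf}, so its time derivative $-Ric(\tilde\omega)-\tilde\omega$ contributes extra terms, and the perturbation $e^{-t}\eta$, though decaying in an absolute sense, is \emph{not} small relative to the collapsing metric $\tilde\omega$ along the fibers. The delicate point is to ensure that the constants produced by the maximum principle are genuinely uniform in $t$ despite this collapsing, and it is the two-sided curvature bound furnished by the type III hypothesis that makes this uniformity possible.
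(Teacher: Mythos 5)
Your proposal follows essentially the same route as the paper: a parabolic Schwarz-lemma comparison of the two flows using the bounded potentials from Lemma \ref{ST} as barriers (the paper traces in the opposite direction, bounding $tr_{\omega}\tilde\omega$ first and then recovering $tr_{\tilde\omega}\omega$ from the volume ratio), followed by a Calabi-type estimate on $\Psi=\nabla^{\omega}-\nabla^{\tilde\omega}$ that is closed using Shi's estimate for the type \textrm{III} solution $\tilde\omega$ and the identity expressing $\bar\nabla\Psi$ as the difference of the two curvature tensors. The one point where your bookkeeping needs adjustment is the barrier $-Au=A(\tilde\varphi-\varphi)$: with equal coefficients the leftover term $Ae^{-t}tr_{\omega}(\tilde\omega_0-\omega_0)$ need not be absorbed by $-Atr_{\omega}\omega_t$ when $\tilde\omega_0\not\le\omega_0$, which is why the paper uses two distinct coefficients, $(C_0+1)\tilde\varphi-A\varphi$ with $A$ chosen so that $(C_0+1)\tilde\omega_0\le A\omega_0$.
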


To prove Theorem \ref{main}, the basic idea is to compare two solutions of the K\"ahler-Ricci flow by using the Schwarz Lemma arguments and then to bound the curvature by using the maximum principle.
\begin{proof}[Proof of theorem \ref{main}]
Assume we are given a long time solution $\tilde\omega=\tilde\omega(t)$ of the K\"ahler-Ricci flow \eqref{nkrf} running from a K\"ahler metric $\tilde\omega_0$ on $X$, which is of type \textrm{III} singulary, i.e. for some uniform constant $C\ge1$,
\begin{equation}\label{type}
\sup_{X\times[0,\infty)}|Rm(\tilde\omega(t))|_{\tilde\omega(t)}\le C.
\end{equation}
Define $\tilde\omega_t,\tilde\varphi$ in the same manner as in last section (see Lemma \ref{ST}).
\par Let $\omega_0$ be an arbitrary K\"ahler metric on $X$ and $\omega$ the solution of the K\"ahler-Ricci flow \eqref{nkrf} running from $\omega_0$. We now show that $\omega$ is of type \textrm{III} singulary.
\par In the following, we use $g_{i\bar j},R_{i\bar jk\bar l},R_{i\bar j},\nabla, \Gamma_{jk}^i$ etc. to denote the local components, curvature, connection, Christoffel symbols etc. of $\omega$ and $\tilde g_{i\bar j},\tilde R_{i\bar jk\bar l},\tilde R_{i\bar j},\tilde\nabla,\tilde \Gamma_{jk}^i$ etc. to denote those of $\tilde\omega$.
\par Firstly, by a direct computation we have
\begin{equation}\label{trace0.0}
\Delta_{\omega}tr_\omega\tilde\omega=g^{\bar bp}g^{\bar qa}\tilde g_{p\bar q}R_{a\bar b}-g^{\bar ji}g^{\bar qp}\tilde R_{i\bar jp\bar q}+g^{\bar ji}g^{\bar qp}\tilde g^{\bar ba}\nabla_{i}\tilde g_{p\bar b}\nabla_{\bar j}\tilde g_{a\bar q}.
\end{equation}
On the other hand,
\begin{align}\label{trace0.1}
\partial_ttr_\omega\tilde\omega&=\partial_t(g^{\bar ji}\tilde g_{i\bar j})\nonumber\\
&=-g^{\bar jp}g^{\bar qi}\tilde g_{i\bar j}(-R_{p\bar q}-g_{p\bar q})+g^{\bar ji}(-\tilde R_{i\bar j}-\tilde g_{i\bar j})\nonumber\\
&=g^{\bar jp}g^{\bar qi}\tilde g_{i\bar j}R_{p\bar q}-tr_\omega(Ric(\tilde\omega)).
\end{align}
By combining \eqref{trace0.0} and \eqref{trace0.1}, we get the evolution of $tr_{\omega}\tilde\omega$:
\begin{equation}
(\partial_t-\Delta_{\omega})tr_{\omega}\tilde\omega=-tr_{\omega}(Ric(\tilde\omega))+g^{\bar ji}g^{\bar qp}\tilde R_{i\bar jp\bar q}-g^{\bar ji}g^{\bar qp}\tilde g^{\bar ba}\nabla_{i}\tilde g_{p\bar b}\nabla_{\bar j}\tilde g_{a\bar q}\nonumber.
\end{equation}
By assumption, the curvature of $\tilde\omega$ is uniformly bounded (see \eqref{type}), so we can find some uniform constant $C\ge1$ such that
\begin{equation}\label{trace0}
(\partial_t-\Delta_{\omega})tr_{\omega}\tilde\omega\le Ctr_{\omega}\tilde\omega+C(tr_{\omega}\tilde\omega)^2-g^{\bar ji}g^{\bar qp}\tilde g^{\bar ba}\nabla_{i}\tilde g_{p\bar b}\nabla_{\bar j}\tilde g_{a\bar q}.
\end{equation}
Consequently, there exists two uniform positive constants $C_0$ and $C$ such that (see \cite{Y,C})
\begin{equation}\label{trace1}
(\partial_t-\Delta_{\omega})\log tr_{\omega}\tilde\omega\le C_0tr_{\omega}\tilde\omega+C.
\end{equation}
On the other hand, by Lemma \ref{ST} we have
$$(\partial_t-\Delta_{\omega})\tilde\varphi=\partial_t\tilde\varphi-tr_{\omega}\tilde\omega+tr_{\omega}\tilde\omega_t\le -tr_{\omega}\tilde\omega+tr_{\omega}\tilde\omega_t+C$$
and
$$(\partial_t-\Delta_{\omega})\varphi=\partial_t\varphi-n+tr_{\omega}\omega_t\ge tr_{\omega}\omega_t-C.$$
Now, for the constant $C_0$ in \eqref{trace1}, we fix a constant $A\ge C_0+1$ such that
$$(C_0+1)\tilde\omega_0\le A\omega_0$$
and so
$$(C_0+1)\tilde\omega_t\le A\omega_t.$$
Then we get
$$(\partial_t-\Delta_{\omega})((C_0+1)\tilde\varphi-A\varphi)\le-(C_0+1)tr_{\omega}\tilde\omega+C$$
and hence,
\begin{equation}\label{trace2}
(\partial_t-\Delta_{\omega})(\log tr_{\omega}\tilde\omega+(C_0+1)\tilde\varphi-A\varphi)\le-tr_{\omega}\tilde\omega+C
\end{equation}
By applying the maximum principle to \eqref{trace2} and Lemma \ref{ST} we have, for some uniform constant $C\ge1$,
$$tr_{\omega}\tilde\omega\le C$$
on $X\times[0,\infty)$. Note that Lemma \ref{ST} implies that one can find a constant $C\ge1$ such that
$$C^{-1}\tilde\omega^n\le\omega^n\le C\tilde\omega^n$$
on $X\times[0,\infty)$. So,
\begin{equation}\label{trace2.1}
tr_{\tilde\omega}\omega\le\frac{1}{(n-1)!}(tr_{\omega}\tilde\omega)^{n-1}\frac{\omega^n}{\tilde\omega^n}\le C.
\end{equation}
Consequently, we have a uniform constant $C\ge1$ such that
\begin{equation}\label{trace3}
C^{-1}\tilde\omega\le\omega\le C\tilde\omega.
\end{equation}
on $X\times[0,\infty)$.\\

\par Combining \eqref{trace3} and \eqref{type}, we can modify the well-known arguments for the K\"ahler-Ricci flow to bound the curvature of $\omega$ (see e.g. \cite{C,PSS,SW}). For completeness, we present some details here. Firstly, define a tensor $\Psi=(\Psi_{ij}^k)$ by $\Psi_{ij}^k:=\Gamma_{ij}^k-\tilde\Gamma^k_{ij}$ and $S=|\Psi|_\omega^2$, Then
$$S=g^{\bar ji}g^{\bar lk}g^{\bar qp}\tilde\nabla_ig_{k\bar q}\overline{\tilde\nabla_{j}g_{l\bar p}}.$$
By \eqref{trace3} we find that the last term in \eqref{trace0}
\begin{align}
g^{\bar ji}g^{\bar qp}\tilde g^{\bar ba}\nabla_{i}\tilde g_{p\bar b}\nabla_{\bar j}\tilde g_{a\bar q}&=g^{\bar ji}g^{\bar qp}\tilde g^{\bar ba}(\nabla_{i}-\tilde\nabla_i)\tilde g_{p\bar b}(\nabla_{\bar j}-\tilde\nabla_{\bar j})\tilde g_{a\bar q}\nonumber\\
&=g^{\bar ji}g^{\bar qp}\tilde g^{\bar ba}(-\Psi_{ip}^d)\tilde g_{d\bar b}(-\overline{\Psi_{jq}^e})\tilde g_{a\bar e}\nonumber\\
&\ge C^{-1}S,
\end{align}
Then we easily sees from \eqref{trace0} that, for some uniform constant $C\ge1$,
\begin{equation}\label{C30}
(\partial_t-\Delta_{\omega})tr_{\omega}\tilde\omega\le C-C^{-1}S.
\end{equation}
We need to compute the evolution of $S$. Firstly, recall \cite[(2.38)]{SW}:
\begin{align}\label{C31}
\Delta_{\omega}S=&2Re(g^{\bar ji}g^{\bar qp}g_{k\bar l}(\Delta_\omega\Psi_{ip}^{k})\overline{\Psi_{jq}^l})+|\nabla\Psi|_{\omega}^2+|\overline{\nabla}\Psi|_\omega^2\nonumber\\
&+R^{\bar ji}g^{\bar qp}g_{k\bar l}\Psi_{ip}^k\overline{\Psi_{jq}^l}+g^{\bar ji}R^{\bar qp}g_{k\bar l}\Psi_{ip}^k\overline{\Psi_{jq}^l}-g^{\bar ji}g^{\bar qp}R_{k\bar l}\Psi_{ip}^k\overline{\Psi_{jq}^l}.
\end{align}
Secondly, we have
\begin{equation}\label{C32}
\partial_t\Psi_{ip}^k=\partial_t\Gamma_{ip}^k-\partial_t\tilde\Gamma_{ip}^k=-\nabla_{i}R_{p}^{\ \,k}+\tilde\nabla_{i}\tilde R_{p}^{\ \,k}.
\end{equation}
Note that in \eqref{C32} both $\partial_t\Gamma_{ip}^k$ and $\partial_t\tilde\Gamma_{ip}^k$ are tensors, we can compute them in normal coordinates with respect to $\omega$ and $\tilde\omega$, respectively.
\par Recall \cite[(2.43),(2.44)]{SW}:
\begin{equation}\label{C33}
\nabla_{\bar b}\Psi_{ip}^k=\tilde R_{i\bar bp}^{\ \ \ \,k}-R_{i\bar bp}^{\ \ \ \,k},
\end{equation}
and
\begin{equation}\label{C34}
\Delta_\omega\Psi_{ip}^k=\nabla^{\bar b}\tilde R_{i\bar bp}^{\ \ \ \,k}-\nabla_iR_{p}^{\ \,k}.
\end{equation}
Combining \eqref{C32} and \eqref{C34} gives
\begin{equation}\label{C35}
\partial_t\Psi_{ip}^k=\Delta_\omega\Psi_{ip}^k+\tilde\nabla_{i}\tilde R_{p}^{\ \,k}-\nabla^{\bar b}\tilde R_{i\bar bp}^{\ \ \ \,k}.
\end{equation}
and hence
\begin{align}\label{C36}
\partial_tS&=\partial_t|\Psi|_\omega^2\nonumber\\
&=2Re(g^{\bar ji}g^{\bar qp}g_{k\bar l}(\Delta_\omega\Psi_{ip}^k+\tilde\nabla_{i}\tilde R_{p}^{\ \,k}-\nabla^{\bar b}\tilde R_{i\bar bp}^{\ \ \ \,k})\overline{\Psi_{jq}^l})+S\nonumber\\
&+R^{\bar ji}g^{\bar qp}g_{k\bar l}\Psi_{ip}^k\overline{\Psi_{jq}^l}+g^{\bar ji}R^{\bar qp}g_{k\bar l}\Psi_{ip}^k\overline{\Psi_{jq}^l}-g^{\bar ji}g^{\bar qp}R_{k\bar l}\Psi_{ip}^k\overline{\Psi_{jq}^l}.
\end{align}
It follows from \eqref{C31} and \eqref{C36} that
\begin{equation}\label{C36.1}
(\partial_t-\Delta_\omega)S=S-|\nabla\Psi|_{\omega}^2-|\overline{\nabla}\Psi|_\omega^2+2Re(g^{\bar ji}g^{\bar qp}g_{k\bar l}(\tilde\nabla_{i}\tilde R_{p}^{\ \,k}-\nabla^{\bar b}\tilde R_{i\bar bp}^{\ \ \ \,k})\overline{\Psi_{jq}^l})
\end{equation}
Note that, by Lemma \ref{Shi} and \eqref{trace3}, $\tilde\nabla_{i}\tilde R_{p}^{\ \,k}$ is uniformly bounded with respect to $\omega$. Moreover,
\begin{align}\label{C37}
\nabla^{\bar b}\tilde R_{i\bar bp}^{\ \ \ \,k}&=g^{\bar ba}\nabla_a\tilde R_{i\bar bp}^{\ \ \ \,k}\nonumber\\
&=g^{\bar ba}(\nabla_a-\tilde\nabla_a)\tilde R_{i\bar bp}^{\ \ \ \,k}+g^{\bar ba}\tilde\nabla_a\tilde R_{i\bar bp}^{\ \ \ \,k}\nonumber\\
&=\Psi*Rm(\tilde\omega)+g^{\bar ba}\tilde\nabla_a\tilde R_{i\bar bp}^{\ \ \ \,k},
\end{align}
where $\Psi*Rm(\tilde\omega)$ means some linear combination of products of $\Psi$ and $Rm(\tilde\omega)$ by contraction using $\omega$. Then, combining Lemma \ref{Shi},  \eqref{trace3}, Cauchy inequality and above facts, we conclude from \eqref{C36.1} that, for some uniform constant $C\ge1$,
\begin{equation}\label{C38}
(\partial_t-\Delta_\omega)S\le CS+C-|\nabla\Psi|_{\omega}^2-|\overline{\nabla}\Psi|_\omega^2.
\end{equation}

Using \eqref{C30} and \eqref{C38}, we find a sufficiently large constant $A$ such that, for some constant $C\ge1$,
\begin{equation}\label{C39}
(\partial_t-\Delta_\omega)(S+Atr_{\omega}\tilde\omega)\le-S+C.
\end{equation}
By applying the maximum principle to \eqref{C39}, we find a constant $C\ge1$ such that
\begin{equation}\label{C310}
S\le C.
\end{equation}
Then, by noting from \eqref{C33}, \eqref{type} and \eqref{trace3} that
\begin{equation}\label{C311}
|\bar\nabla\Psi|_\omega^2=|\tilde R_{i\bar bp}^{\ \ \ \,k}-R_{i\bar bp}^{\ \ \ \,k}|_\omega^2\ge\frac{1}{2}|Rm(\omega)|_\omega^2-C,
\end{equation}
we know that \eqref{C38} implies
\begin{equation}\label{C312}
(\partial_t-\Delta_\omega)S\le C-\frac{1}{2}|Rm(\omega)|_\omega^2.
\end{equation}
Recall \cite[(2.61)]{SW}, for points where $|Rm(\omega)|_\omega>0$,
\begin{equation}\label{C313}
(\partial_t-\Delta_\omega)|Rm(\omega)|_\omega\le C|Rm(\omega)|_\omega^2-\frac{1}{2}|Rm(\omega)|_\omega.
\end{equation}
Therefore, for sufficiently large constant $A$, we have
$$(\partial_t-\Delta_\omega)(|Rm(\omega)|_\omega+AS)\le-\frac{1}{2}|Rm(\omega)|_\omega+C$$
and hence, by maximum principle, we have a constant $C\ge1$ such that
$$\sup_{X\times[0,\infty)}|Rm(\omega)|_\omega\le C.$$
\par Theorem \ref{main} is proved.
\end{proof}

\section{Examples}\label{ex}
Given Theorem \ref{main0}, we can check the singularity type if the K\"ahler-Ricci flow admits some special solutions. We show several examples as follows.
\begin{exmp}\label{ex1}
\emph{(An alternative proof for item (1) of Theorem \ref{TZ})} Let $X$ be a Calabi-Yau manifold. Thanks to Yau \cite{Y}, we can fix a Ricci-flat metric $\omega_{CY}$ on $X$. Then it is easy to see that
$\omega(t)=e^{-t}\omega_{CY}$ is the solution to the K\"ahler-Ricci flow \eqref{nkrf} with initial metric $\omega(0)=\omega_{CY}$. Then
$$|Rm(\omega(t))|_{\omega(t)}=e^{t}|Rm(\omega_{CY})|_{\omega_{CY}}$$
and hence, $\omega(t)$ develops type \textrm{III} singularity, i.e.
$$\sup_{X\times[0,\infty)}|Rm(\omega(t))|_{\omega(t)}<\infty,$$
if and only if $\omega_{CY}$ is flat. Therefore, by Theorem \ref{main0}, the K\"ahler-Ricci flow running from any K\"ahler metric on $X$ develops type \textrm{III} singularity if and only if there exists a flat metric on $X$, and so if and only if $X$ is a finite quotient of a torus. This is exactly item (1) of Theorem \ref{TZ}.
\end{exmp}
Next, we reprove item (2) of Theorem \ref{TZ} by using the arguments in this note. To this end, we first give a general lemma.

\begin{lem}\label{lem4.1}
Let $X$ be a compact K\"ahler manifold with semi-ample $K_X$. Assume there exists a K\"ahler metric $\hat\omega$ such that the K\"ahler-Ricci flow $\eqref{nkrf}$, $\omega=\omega(t)$, running from $\hat\omega$ develops type \textrm{III} singularity. Then there exists a uniform positive constant $C$ such that, on $X\times[0,\infty)$, we have
$$\omega\le C\hat\omega.$$
\end{lem}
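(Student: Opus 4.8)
The plan is to prove the equivalent estimate $\sup_{X\times[0,\infty)}tr_{\hat\omega}\omega\le C$; indeed $\omega\le C\hat\omega$ is nothing but a uniform upper bound for the largest eigenvalue of $\omega$ relative to the fixed metric $\hat\omega$, i.e. for $tr_{\hat\omega}\omega$. I would attack this by the parabolic maximum principle applied to (the logarithm of) $tr_{\hat\omega}\omega$, in the spirit of the trace estimates used for Theorem \ref{main}, feeding in the type \textrm{III} hypothesis \eqref{type} and the $C^0$ and $\partial_t\varphi$ bounds of Lemma \ref{ST}. Note that one cannot simply copy the second half of the proof of Theorem \ref{main}: there $\omega\le C\tilde\omega$ was obtained by bounding $tr_\omega\tilde\omega$ and using the volume, but since $\omega$ collapses along the fibres of $f$ the quantity $tr_\omega\hat\omega$ is \emph{unbounded}, so the bound on $tr_{\hat\omega}\omega$ must be proved directly.

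First I would compute the evolution of $u:=tr_{\hat\omega}\omega=\hat g^{\bar ji}g_{i\bar j}$. Since $\hat\omega$ is independent of $t$ one has $\partial_t u=-tr_{\hat\omega}Ric(\omega)-u$, while a direct computation of $\Delta_\omega u$ (using that $g_{i\bar j}$ is parallel for $\omega$ and writing the $\omega$-covariant derivatives of $\hat g^{\bar ji}$ through the connection-difference tensor $\Psi=\Gamma(\omega)-\Gamma(\hat\omega)$ and the identity \eqref{C33}) shows that the $Ric(\omega)$ terms cancel, leaving
\[
(\partial_t-\Delta_\omega)tr_{\hat\omega}\omega=-tr_{\hat\omega}\omega-\mathcal{R}-G,
\]
where $G\ge0$ is a good quadratic expression in $\Psi$ and $\mathcal{R}$ is built from the bisectional curvature of $\hat\omega$ contracted twice with $\omega^{-1}$ and once with $\omega$. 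Passing to $\log u$ and cancelling $G$ against $|\nabla u|_\omega^2/u$ by the usual Schwarz-type Cauchy--Schwarz inequality would give $(\partial_t-\Delta_\omega)\log tr_{\hat\omega}\omega\le -1-\mathcal{R}/tr_{\hat\omega}\omega$, after which I would try to combine this with a multiple of the potential $\varphi$, as in \eqref{trace2} and using Lemma \ref{ST}, to generate a favourable $-tr_\omega(\cdots)$ term.

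The hard part, and the place where the above naive scheme breaks down, is the term $\mathcal{R}$: it involves the curvature of the \emph{fixed} metric $\hat\omega$, not that of $\omega$, so the type \textrm{III} bound \eqref{type} does not bound it. Because $\omega$ collapses along the fibres while $\hat\omega$ does not, the crude estimate is only $|\mathcal{R}|\le C\,tr_\omega\hat\omega\,tr_{\hat\omega}\omega$, and the factor $tr_\omega\hat\omega$ grows, so $\mathcal{R}$ cannot be absorbed through the potentials the way the analogous term was in the proof of Theorem \ref{main} (where two collapsing flows with the same limiting class were compared). To overcome this I would replace $\hat\omega$ by the degenerating reference $\omega_t=e^{-t}\hat\omega+(1-e^{-t})f^*\chi$ from Section \ref{apriori}, whose collapse matches that of $\omega$ along the fibres and whose $f^*\chi$-part, being pulled back from the base, carries no fibre--base bisectional curvature; one then bounds $tr_{\omega_t}\omega$ instead, and since $\omega_t\le C\hat\omega$ this still yields $\omega\le C\hat\omega$. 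In this formulation the analogue of $\mathcal{R}$ should be controllable using \eqref{type} together with Shi's estimate (Lemma \ref{Shi}) and absorbed into $G$ after completing the square — this is the step where the type \textrm{III} hypothesis enters essentially — and the maximum principle applied to the resulting differential inequality would finally give the uniform bound.
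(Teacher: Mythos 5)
Your reduction to a direct bound on $tr_{\hat\omega}\omega$ is right, and so is your observation that the volume-ratio trick from the proof of Theorem \ref{main} fails here because $\omega^n/\hat\omega^n\to0$ when $kod(X)<n$; but the heat operator you choose is the wrong one, and that choice creates the very obstruction that derails your argument. You compute $(\partial_t-\Delta_\omega)tr_{\hat\omega}\omega$ with the Laplacian of the \emph{evolving} metric, which inevitably produces your term $\mathcal{R}$ built from the curvature of the fixed metric contracted with $\omega^{-1}$ --- unbounded in the collapsing case, as you note. The paper's proof avoids $\mathcal{R}$ entirely by running the maximum principle for the operator $(\partial_t-\Delta_{\hat\omega})$, i.e.\ with the Laplacian of the \emph{fixed} metric. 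In that (Aubin--Yau type) computation the only curvature terms are $\hat R_{a\bar b}$, contracted so as to give at most $C\,tr_{\hat\omega}\omega$ (harmless, since $\hat\omega$ is fixed), and the full curvature $R_{i\bar jp\bar q}$ of $\omega$ contracted twice with $\hat g^{-1}$, which the type \textrm{III} hypothesis \eqref{type} bounds by $C(tr_{\hat\omega}\omega)^2$; together with $-tr_{\hat\omega}Ric(\omega)\le C\,tr_{\hat\omega}\omega$ from the time derivative this yields $(\partial_t-\Delta_{\hat\omega})\log tr_{\hat\omega}\omega\le C\,tr_{\hat\omega}\omega+C$, and the barrier $A\hat\varphi$, for which $(\partial_t-\Delta_{\hat\omega})\hat\varphi\le -tr_{\hat\omega}\omega+C$ by Lemma \ref{ST} and $\hat\omega_t\le C\hat\omega$, finishes the job. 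The type \textrm{III} hypothesis thus enters exactly where the dangerous curvature term sits, and no degenerating reference metric is needed.

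Your proposed repair --- replacing $\hat\omega$ by $\omega_t=e^{-t}\hat\omega+(1-e^{-t})f^*\chi$ and bounding $tr_{\omega_t}\omega$ instead --- is asserted rather than proved, and I do not see how it goes through. The analogue of $\mathcal{R}$ would now involve $Rm(\omega_t)$, and nothing in your toolkit controls it: the family $\omega_t$ shrinks the fibre directions by $e^{-t}$, so $|Rm(\omega_t)|_{\omega_t}$ generically blows up (already for a product $e^{-t}\omega_Y+\omega_Z$ it behaves like $e^{t}|Rm(\omega_Y)|_{\omega_Y}$ unless $\omega_Y$ is flat), while \eqref{type} and Shi's estimate (Lemma \ref{Shi}) only control $Rm(\omega)$, not $Rm(\omega_t)$. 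You would also pick up an extra term from differentiating $\omega_t^{-1}$ in time, which you do not address. The sentence claiming the bad term ``should be controllable \ldots and absorbed into $G$ after completing the square'' is precisely the step that needs a proof, and it is where the argument has a genuine gap.
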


\begin{proof}
The proof is similar to arguments in Section \ref{proof}. First, we have
\begin{equation}
\Delta_{\hat\omega}tr_{\hat\omega}\omega=\hat g^{\bar bp}\hat g^{\bar qa}g_{p\bar q}\hat R_{a\bar b}-\hat g^{\bar ji}\hat g^{\bar qp}R_{i\bar jp\bar q}+\hat g^{\bar ji}\hat g^{\bar qp}g^{\bar ba}\hat\nabla_ig_{p\bar b}\hat\nabla_{\bar j}g_{a\bar q}\nonumber
\end{equation}
and
$$\partial_ttr_{\hat\omega}\omega=-tr_{\hat\omega}(Ric(\omega))-tr_{\hat\omega}\omega.$$
Combining the above two equations gives
\begin{align}
(\partial_t-\Delta_{\hat\omega})tr_{\hat\omega}\omega=&-tr_{\hat\omega}(Ric(\omega))-tr_{\hat\omega}\omega-\hat g^{\bar bp}\hat g^{\bar qa}g_{p\bar q}\hat R_{a\bar b}\nonumber\\
&+\hat g^{\bar ji}\hat g^{\bar qp}R_{i\bar jp\bar q}-\hat g^{\bar ji}\hat g^{\bar qp}g^{\bar ba}\hat\nabla_ig_{p\bar b}\hat\nabla_{\bar j}g_{a\bar q}\nonumber.
\end{align}
Since $\hat\omega$ is a fixed metric and $\omega$ is of type \textrm{III} singularity, one easily finds a uniform constant $C\ge1$ such that
\begin{equation}
(\partial_t-\Delta_{\hat\omega})tr_{\hat\omega}\omega\le Ctr_{\hat\omega}\omega+C(tr_{\hat\omega}\omega)^2-\hat g^{\bar ji}\hat g^{\bar qp}g^{\bar ba}\hat\nabla_ig_{p\bar b}\hat\nabla_{\bar j}g_{a\bar q}\nonumber
\end{equation}
and hence,
\begin{equation}\label{4.1}
(\partial_t-\Delta_{\hat\omega})\log tr_{\hat\omega}\omega\le Ctr_{\hat\omega}\omega+C.
\end{equation}
On the other hand, if we define $\hat\omega_t=e^{-t}\hat\omega+(1-e^{-t})f^*\chi$, $\hat\varphi=\hat\varphi(t)$ as in Section \ref{apriori}, then we have, for some uniform constant $C\ge1$,
$$(\partial_t-\Delta_{\hat\omega})\hat\varphi=\partial_t\hat\varphi-tr_{\hat\omega}\omega+tr_{\hat\omega}\hat\omega_t\le-tr_{\hat\omega}\omega+C,$$
where we have used Lemma \ref{bound} and the fact that, for some uniform constant $C\ge1$, $\hat\omega_t\le C\hat\omega$ on $X\times[0,\infty)$. Then, as in Section \ref{proof}, we can choose a sufficiently large constant $A$ such that
$$(\partial_t-\Delta_{\hat\omega})(\log tr_{\hat\omega}\omega+A\hat\varphi)\le -tr_{\hat\omega}\omega+C.$$
Now, by the maximum principle we see that
$$tr_{\hat\omega}\omega\le C.$$
Lemma \ref{lem4.1} is proved.
\end{proof}

\begin{rem}
An immediate consequence of Lemma \ref{lem4.1} is that, under the assumption of Lemma \ref{lem4.1}, the diameter along the flow is uniformly bounded from above. Then, since type \textrm{III} singularity implies uniform bound for Ricci curvature, we can conclude some results on the Gromov-Hausdorff convergence of the flow. For example, by using arguments in \cite{GTZ13} (also see \cite[Section 5]{Zy}) we can conclude that every Gromov-Hausdorff limit $(M,d_M)$ along the flow (i.e. there exists a time sequence $t_i\to\infty$ with $(X,\omega(t_i))\to(M,d_M)$ in Gromov-Hausdorff topology) contains an open dense subset homeomorphic to $X_{can}\setminus V$. Here $X_{can}$ and $V$  are the same as in Section 1.
\end{rem}
We are ready to give the second example.

\begin{exmp}\label{ex2}
\emph{(An alternative proof of item (2) of Theorem \ref{TZ})} For the first part, assume $X$ be a compact K\"ahler manifold with ample $K_X$. Thanks to Aubin \cite{A} and Yau \cite{Y}, there exists a unique K\"ahler-Einstein metric $\omega_{KE}$ satisfying
$$Ric(\omega_{KE})=-\omega_{KE}.$$
Then it is easy to see that $\omega(t)\equiv\omega_{KE}$ is the solution to the K\"ahler-Ricci flow \eqref{nkrf} with initial metric $\omega_{KE}$. Obviously, this solution develops type \textrm{III} singularity. Then, by Theorem \ref{main0}, the K\"ahler-Ricci flow running from any K\"ahler metric on $X$ develops type \textrm{III} singularity. This is exactly the first part of item (2) of Theorem \ref{TZ}, which was first proved by Cao \cite{C} and Tsuji \cite{Ts}.
\par Now, let's look at the second part. We need to show that, on compact K\"ahler manifold $X$ with semi-ample $K_X$ and $kod(X)=n$, if there exists a solution $\omega=\omega(t)$ to the K\"ahler-Ricci flow \eqref{nkrf} with type \textrm{III} singularity, then $K_X$ is ample. To see this, we first apply Lemma \ref{lem4.1} to see that
$$\omega(t)\le C\hat\omega$$
for some fixed K\"ahler metric $\hat\omega$ on $X$ and constant $C\ge1$. On the other hand, since $kod(X)=n$, Lemma \ref{bound} gives
$$\omega(t)^n\ge C^{-1}\hat\omega^n,$$
and so, as in \eqref{trace2.1} we have
$$tr_{\omega(t)}\hat\omega\le C$$
for some uniform constant $C\ge1$. In conclusion, we have proved that, for some uniform constant $C\ge1$, one has
\begin{equation}\label{4.2}
C^{-1}\hat\omega\le\omega\le C\hat\omega.
\end{equation}
Having \eqref{4.2}, one can use the well-known arguments in the K\"ahler-Ricci flow (see e.g. \cite{C,PSS,SW}) to obtain higher order estimates of $\omega(t)$ and then, by the Arzela-Ascoli theorem we can choose a time sequence $t_i\to\infty$ such that $\omega(t_i)\to\omega_\infty$ in $C^\infty(X,\hat\omega)$-topology, where $\omega_\infty$ is a smooth K\"ahler metric on $X$ and $\omega_\infty\in -2\pi c_1(X)$. Therefore, having a smooth positive representative, $K_X$ is ample by Kodaira embedding theorem. The proof is completed.
\end{exmp}

The last example is a special case of item (3) of Theorem \ref{TZ}.
\begin{exmp}
Let $Y$ be a Calabi-Yau manifold and $Z$ a compact K\"ahler manifold with ample $K_Y$. Set $X=Y\times Z$. It was first proved in \cite{Gi,SW} that the K\"aher-Ricci flow running from any K\"ahler metric on $X$ develops type \textrm{III} singularity if $Y$ is a torus. Note that this result is a special case of the second part of item (3) of Theorem \ref{TZ}. Moreover, according to item (3) of Theorem \ref{TZ}, the K\"ahler-Ricci flow on $X$ develops type \textrm{III} singularity if and only if $Y$ is a finite quotient of a torus. We can apply our Theorem \ref{main0} to reprove this result. Indeed, as before, we only need to check some special solution. Precisely, if we let $\omega_X=\omega_Y+\omega_Z$, where $\omega_Y$ is a Calabi-Yau metric on $Y$ and $\omega_Z$ is the K\"ahler-Einstein metric on $Z$, then the solution of the K\"ahler-Ricci flow \eqref{nkrf} running from $\omega_X$ is given by
$$\omega(t)=e^{-t}\omega_Y+\omega_Z.$$
Then we have
$$|Rm(\omega(t))|^2_{\omega(t)}=e^{2t}|Rm(\omega_Y)|^2_{\omega_Y}+|Rm(\omega_Z)|^2_{\omega_Z},$$
from which we see that $\omega(t)$ develops type \textrm{III} singularity if and only if $\omega_Y$ is flat. Therefore, by Theorem \ref{main0}, the K\"ahler-Ricci flow \eqref{nkrf} running from any K\"ahler metric on $X$ develops type \textrm{III} singularity if and only if there exists a flat metric on $Y$, and so if and only if $Y$ is a finite quotient of a torus.
\end{exmp}

\section*{Acknowledgements}
The author is grateful to Professor Huai-Dong Cao for constant encouragement and support. He also thanks Professors Huai-Dong Cao and Valentino Tosatti for their interest in this work and some useful comments on a previous version of this paper and the referee for giving a useful comment.


\begin{thebibliography}{99}
\bibitem{A} Aubin, T., \'{E}quations du type Monge-Amp\`{e}re sur les vari\'{e}t\'{e}s k\"ahl\'{e}riennes compactes, Bull. Sci. Math. (2) 102 (1978), no. 1, 63-95
\bibitem{CHP} Campana, F., H\"{o}ring, A. and Peternell, T., Abundance for K\"ahler threefolds, Ann. Sci. \'{E}c. Norm. Sup\'{e}r. (4) 49 (2016), no. 4, 971-1025
\bibitem{C} Cao, H.-D., Deformation of K\"{a}hler metrics to K\"{a}hler-Einstein metrics on compact K\"{a}hler manifolds, Invent. Math. 81 (2), 359-372 (1985)
\bibitem{FZ} Fong, F.T.H. and Zhang, Z., The collapsing rate of The K\"{a}hler-Ricci flow with regular infinite time singularities, J. Reine Angew. Math. 703 (2015), 95-113
\bibitem{Gi} Gill, M., Collapsing of products along the K\"ahler-Ricci flow, Trans. Amer. Math. Soc. 366 (2014), no. 7, 3907-3924
\bibitem{GTZ13} Gross, M., Tosatti, V. and Zhang, Y.-G., Collapsing of abelian fibred Calabi-Yau manifolds, Duck Math. J. 162 (2013), no. 3, 517-551
\bibitem{Guo} Guo, B., On the K\"ahler Ricci flow on projective manifolds of general type, Int. Math. Res. Not. 2017 (7), 2139-2171
\bibitem{Ha} Hamilton, R.S., The Ricci flow on surfaces, in \emph{Mathematics and general relativity (Santa Cruz, CA, 1986)}, 237-262, Contemp. Math. 71, Amer. Math. Soc., Providence, RI, 1988
\bibitem{Ha93} Hamilton, R.S., The formation of singularities in the Ricci flow, in \emph{Surveys in differential geometry, Vol. II (Combridge, MA, 1993)}, 7-136, Int. Press, Cambridge, MA, 1995
\bibitem{HeTo} Hein, H.-J. and Tosatti, V., Remarks on the collapsing of torus fibered Calabi-Yau manifolds, Bull. Lond. Math. Soc. 47 (2015), no. 6, 1021-1-27
\bibitem{Ka} Kawamata, Y., On the length of an extremal rational curve, Invent. Math. 105 (1991) 609-611
\bibitem{Mo} Moishezon, B. G., On $n$-dimensional compact varieties with $n$ algebraically independent meromorphic functions, Am. Math. Soc. Transl. 63, 51-174 (1967)
\bibitem{PSS} Phong, D.H., \v{S}e\v{s}um, N. and Sturm, J., Multiplier ideal sheaves and the K\"ahler-Ricci flow, Comm. Anal. Geom. 15 (2007), no. 3, 613-632
\bibitem{Sh} Shi, W.-X., Deforming the metric on complete Riemannian manifolds, J. Differential Geom., 30 (1989), no. 1, 223-301
\bibitem{ST4} Song, J. and Tian, G., Bounding scalar curvature for global solutions of the K\"ahler-Ricci flow, Amer. J. Math. 138 (2016) no. 3, 683-695,
\bibitem{SW} Song, J. and Weinkove, B., Introduction to the K\"ahler-Ricci flow, Chapter 3 of 'Introduction to the K\"ahler-Ricci flow', eds S. Boucksom, P. Eyssidieux, V. Guedj, Lecture Notes Math. 2086, Springer 2013
\bibitem{TZo} Tian, G. and Zhang, Z., On the K\"ahler-Ricci flow on projective manifolds of general type, Chinese Ann. Math. Ser. B 27 (2006), no. 2, 179-192
\bibitem{To.kawa} Tosatti, V., KAWA lecture notes on the K\"ahler-Ricci flow, arXiv:1508.04823, to appear in Ann. Fac. Sci. Toulouse Math.
\bibitem{ToZy} Tosatti, V. and Zhang, Y.-G., Infinite-time singularities of the K\"{a}hler-Ricci flow, Geom. Topol. 19 (2015), no. 5, 2925-2948
\bibitem{Ts} Tsuji, H., Existence and degeneration of K\"{a}hler-Einstein metrics on minimal algebraic varieties of general type, Math. Ann. 281, 123-133 (1988)
\bibitem{Y} Yau, S.-T., On the Ricci curvature of a compact K\"{a}hler manifold and the complex Monge-Amp\`{e}re equation, I, Comm. Pure Appl. Math. 31 (1978) 339-411
\bibitem{Zy} Zhang, Y.S., Collapsing limits of the K\"ahler-Ricci flow and the continuity method, arXiv:1705.01434
\bibitem{Zo} Zhang, Z., Scalar curvature bound for the K\"ahler-Ricci flows over minimal manifolds of general type, Int. Math. Res. Not. IMRN 2009, no. 20, 3901-3912
\end{thebibliography}
\end{document}